\documentclass[a4paper,12pt]{amsart}

\usepackage{amssymb}
\usepackage{amsmath,amsthm}
\usepackage{enumerate,paralist}
\usepackage{amstext}
\usepackage{dsfont}
\usepackage[english]{babel}
\usepackage{color}
\usepackage{textgreek}
\usepackage{hyperref}
\usepackage{comment}
\usepackage{esint}
\usepackage[normalem]{ulem}

\usepackage[a4paper, left=2.5cm, right=2.5cm, top=3cm, bottom=2cm]{geometry}
\usepackage{mathtools}
\usepackage{latexsym}
\usepackage{mathrsfs}
\usepackage{MnSymbol}
\usepackage{bbm}

\theoremstyle{plain}
\newtheorem{theorem}{Theorem}

\newtheorem{corollary}{Corollary}
\newtheorem{lemma}{Lemma}
\newtheorem{proposition}{Proposition}
\theoremstyle{definition}
\newtheorem{definition}{Definition}
\newtheorem{example}{Example}
\newtheorem{remark}{Remark}

\numberwithin{theorem}{section}
\numberwithin{corollary}{section}
\numberwithin{lemma}{section}
\numberwithin{definition}{section}
\numberwithin{example}{section}
\numberwithin{remark}{section}
\numberwithin{proposition}{section}
\numberwithin{assumption}{section}

\usepackage{color}
\usepackage{xcolor}
\definecolor{MY}{rgb}{0.5,0,0.45}

\newcommand{\R}{\mathbb{R}}
\newcommand{\N}{\mathbb{N}}

\newcommand{\E}{\mathbb{E}}

\newcommand{\dD}{\mathcal{D}}
\newcommand{\eE}{\mathcal{E}}

\newcommand{\meas}{\mathfrak{m}}

\newcommand{\intM}{M^\circ}

\def\be{\beta}
\def\la{\lambda}

\def\si{\sigma}

\def\Om{\Omega}

\def\dM{\partial M}

\def\lp{\left(}
\def\rp{\right)}

\DeclareTextFontCommand{\dc}{\color{blue}}
\newcommand{\measrestr}{%
  \,\raisebox{-.127ex}{\reflectbox{\rotatebox[origin=br]{-90}{$\lnot$}}}\,%
}

\title[Dynkin condition for manifolds with boundary]{A Dynkin condition for manifolds with boundary}

\author{Marie Bormann}
\address{M.~Bormann: Institute for Applied Mathematics, Hausdorff Center for Mathematics, Universität Bonn, Endenicher Allee 60, 53115 Bonn}
\email{mbormann@uni-bonn.de}

\author{David Tewodrose}
\address{D.~Tewodrose: Department of Mathematics and Data Science, Vrije Universiteit Brussel, Pleinlaan 2, B-1050 Elsene, Belgium.}
\email{david.tewodrose@vub.be}

\begin{document}

\begin{abstract}
We propose a Dynkin-type condition for smooth Riemannian manifolds with boundary. We show that this condition implies bi-Lipschitz equivalence with a Bakry-\'{E}mery weighted Riemannian manifold obtained via a time change. 
As a consequence, we obtain various results, including a local doubling property as well as lower bounds on the Neumann spectral gap and logarithmic Sobolev constant. The local doubling property also yields a new precompactness theorem for manifolds with boundary.
\end{abstract}

\maketitle

\section{Introduction}

Over the past decade, the geometric and analytical consequences of Dynkin and Kato Ricci curvature lower bounds on geodesically complete Riemannian manifolds have attracted increasing attention. Representative contributions include the works of Rose and Stollmann \cite{RoseStollmann,RoseStollmann2}, Carron \cite{C16}, Rose \cite{Rose}, Carron and Rose \cite{CarronRose}, Rose and Wei \cite{MR4520295}, Carron, Mondello, and the second author \cite{CMT,CMT2,CMT_Torus,MR4925251,CMT5}, Lee \cite{Lee}, Impera, Rimoldi and Veronelli \cite{IRV}, Güneysu and Kuwae \cite{GuneysuKuwae}, Güneysu and Marot \cite{GM}, Mondino and Perales \cite{MondinoPerales}, among others. These conditions demand that the quantity
\begin{equation}\label{eq:Dynkin}
\sup_{x\in M} \int_0^T \int_M p(s,x,y) \mathrm{Ric}_-(y) d\mu(y) ds
\end{equation}
be bounded for some $T>0$ in the Dynkin case, and go to zero as $T\to 0$ in the Kato one. Here $p$ denotes the heat kernel of the manifold $(M,g)$, $\mathrm{Ric}_- : M \to \R_+$ is the negative part of the optimal lower bound on the Ricci curvature tensor and $\mu$ is the Riemannian volume measure. For the whole paper we only consider manifolds of dimension $n \ge 2$.

In this fruitful line of research, manifolds with boundary have received comparatively little attention. For such manifolds, which are not geodesically complete, the heat kernel depends on the choice of a boundary condition. Moreover, the function $\mathrm{Ric}_-$ alone does not capture the curvature effects induced by the boundary. We therefore consider the heat kernel associated with the Neumann boundary condition and use the function $\mathrm{I\!I}_- : \partial M \to \R_+$ to account for boundary effects. The latter function is defined as the negative part of the optimal lower bound on the second fundamental form $\mathrm{I\!I}^g$ of the boundary $\partial M$.

Inspired by Stollmann and Voigt \cite{MR1378151}, Güneysu \cite{MR3751359}, Erbar, Rigoni, Sturm and Tamanini  \cite{MR4403622}, Braun \cite{Braun}, and Carron, Mondello, and the second author \cite{MR4925251},
we propose the following definition, where $\si$ is the boundary measure induced by $g$ on $\dM$.

\begin{definition}[Neumann--Dynkin condition]
We say that a smooth metrically complete Riemannian manifold $(M^n,g)$ with a non-empty boundary satisfies a Neumann--Dynkin condition if there exist $T>0$ and $\gamma \in [0,1/(n-2))$ such that
\begin{equation}\label{eq:NeumannDynkin}\tag{$\mathrm{ND}_{T,\gamma}$}
k_T (\meas) := \sup_{x\in M} \int_0^T \int_{M} p^N(s,x,y)  d\meas(y) ds \le \gamma \qquad  \text{with }\meas := \mathrm{Ric}_- \mu + \mathrm{I\!I}_- \sigma.
\end{equation}
\end{definition}

Note that if $\gamma= 0$, then the positivity of $p^N$ yields $\mathrm{Ric} \ge 0$ and $\mathrm{I\!I} \ge 0$. Moreover, if $\mathrm{Ric}_-$ and $\mathrm{I\!I}_-$ admit non-negative upper bounds $K$ and $L$, respectively, then $\meas \le K \mu + L \sigma$, and hence for any $T>0$,
\begin{equation}\label{eq:boundKL}
k_T(\meas) \le (K+cL)T + c L\sqrt{T}
\end{equation}
for some constant $c>0$ depending on the manifold only (see Remark \ref{rem:bounds}). In particular, any compact Riemannian manifold with boundary satisfies a Neumann--Dynkin condition for a sufficiently small $T>0$. The same holds  in the non-compact case provided the Ricci curvature of the interior and the second fundamental form of the boundary are uniformly bounded from below.

We establish several consequences of \eqref{eq:NeumannDynkin}. Let $V_g(x,r)$ denote the volume of a geodesic ball in $M$ centered at $x$ with radius $r$, and let $\fint$ mean averaging.

\begin{theorem}[Doubling and Poincaré]\label{th:doubling&Poincaré}
    Let $(M^n,g)$ be a smooth metrically complete Riemannian manifold with a non-empty boundary satisfying \eqref{eq:NeumannDynkin} for some $T>0$ and $\gamma \in [0,1/(n-2))$. Then the following holds.
    \begin{enumerate}
        \item The manifold $(M^n,g)$ satisfies a local doubling property: there exists a constant $C_D=C_D(n,T,\gamma)>0$ such that for any $x\in M$ and $r \in (0,\sqrt{T})$,
    \begin{equation*}\label{eq:VD}
    V_g(x,r) \le C_D \, V_g(x,r/2).
    \end{equation*}
    \item The manifold $(M^n,g)$ satisfies a local $L^1$ Poincaré inequality: there exists $C_P=C_P(n,T,\gamma)>0$ such that for any $x \in M$ and $r \in (0,\sqrt{T})$, for any $f \in \mathcal{C}^1(M)$,
        \begin{equation*}\label{eq:PI1}
        \fint_{B_r(x)} \left| f - \fint_{B_r(x)} f d \mu\right| d \mu \le C_P \,  r \fint_{B_r(x)} |df| d\mu.
        \end{equation*}
    \end{enumerate}
\end{theorem}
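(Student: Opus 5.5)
The plan is to follow the strategy announced in the abstract: use the Neumann--Dynkin condition to build a conformal change (time change) of $(M,g)$ whose weighted version satisfies a Bakry--Émery curvature-dimension bound together with convexity of the boundary. We then transfer the doubling and Poincaré properties back to $g$ via bi-Lipschitz equivalence.

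\textbf{Step 1: the gauge function.} Consider the Neumann Schrödinger-type operator $\Delta - \lambda\meas$, with potential $\meas = \mathrm{Ric}_-\mu + \mathrm{I\!I}_-\sigma$. Let $u$ be the associated ``Dynkin gauge'', i.e.\ the solution of
\[
\Delta u = \lambda\,\mathrm{Ric}_- u \ \text{in } M^\circ, \qquad \partial_\nu u = \lambda\,\mathrm{I\!I}_- u \ \text{on } \partial M,
\]
with a suitable multiple $\lambda$ depending on $n,\gamma$. In practice one takes a resolvent/Feynman--Kac construction at scale $T$, e.g.\ $u(x)=\E_x\!\left[\exp\!\left(\lambda \int_0^{T} dA^{\meas}_s\right)\right]$ with $A^\meas$ the additive functional of $\meas$ along reflected Brownian motion. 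By Khas'minskii's lemma, $k_T(\meas)\le\gamma<1$ gives $1\le u\le C(n,T,\gamma)$ uniformly. The restriction $\gamma<1/(n-2)$ is what allows taking the exponent $\lambda$ large enough, roughly $\lambda=n-2$, while keeping the gauge bounded. This is the reason for the threshold.

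\textbf{Step 2: curvature of the transformed space.} Set $\tilde g = u^{2\alpha} g$, together with a weight $e^{-f}$ with $f=\beta\log u$, for well-chosen $\alpha,\beta$. Compute the Bakry--Émery Ricci tensor $\mathrm{Ric}_{\tilde g,f,N}$ and the second fundamental form of $\partial M$ in $\tilde g$. The conformal change formulas produce terms of the form
\[
-\alpha(n-2)\frac{\nabla^2u}{u}-\alpha\frac{\Delta u}{u}g+(\text{gradient squares}),
\qquad
\tilde{\mathrm{I\!I}} = u^{\alpha}\Big(\mathrm{I\!I} + \alpha \frac{\partial_\nu u}{u} g\Big).
\]
The equation of Step 1 is used so that the $\Delta u$ and $\partial_\nu u$ terms cancel $\mathrm{Ric}_-$ and $\mathrm{I\!I}_-$. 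The Hessian and gradient terms are absorbed using the time-change freedom (the weight), yielding $\mathrm{Ric}_{\tilde g,f,N}\ge -K$ and $\tilde{\mathrm{I\!I}}\ge 0$ for some finite $N$ and $K$ depending on $n,T,\gamma$.

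I expect \textbf{Step 2 to be the main obstacle}, specifically controlling the Hessian term $\nabla^2 u/u$, which has no sign. My first attempt is to avoid a pointwise computation entirely. Instead I would argue at the level of Dirichlet forms: a time change of the Neumann form by $u^{2}$ combined with the Girsanov/$h$-transform by $u$ gives a form for which one verifies a $\mathrm{BE}(K,N)$ inequality on $M^\circ$ and boundary convexity in the weak (distributional) sense, following Braun and Erbar--Rigoni--Sturm--Tamanini. If this goes through, the resulting space is an $\mathrm{RCD}(K,N)$ space.

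\textbf{Step 3: conclusion.} Since $1\le u\le C$, the metrics $\tilde g$ and $g$ are bi-Lipschitz with constant depending only on $(n,T,\gamma)$, and the measures $\tilde\mu$ and $\mu$ are mutually comparable with bounded densities. On the RCD/Bakry--Émery space, Bishop--Gromov yields doubling for radii below a fixed multiple of $\sqrt T$, after scaling $K\sim 1/T$. It also yields the local $L^1$ Poincaré inequality via the Cheeger--Colding segment inequality, which holds with convex boundary. Both properties are stable under bi-Lipschitz change of metric and comparable change of measure, up to adjusting radii by the Lipschitz constant. Balls of $g$ are sandwiched between balls of $\tilde g$, and averages of $|df|$ change by bounded factors. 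This gives items (1) and (2) with $C_D,C_P$ depending only on $n,T,\gamma$. Finally, the range $r<\sqrt T$ is recovered by a covering argument from the smaller radius range, using doubling.
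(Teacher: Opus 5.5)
Your overall architecture (gauge function, time change with convex boundary and $\mathrm{BE}(-K/T,N)$, $\mathrm{RCD}$ doubling and Poincaré, bi-Lipschitz transfer) is the one the paper follows. The gap is that Step 2, the step that carries the content, is not actually done: you single out the unsigned Hessian $\nabla^2u/u$ as the obstacle and then defer to a Dirichlet-form argument ``if this goes through''. That fallback is too unspecified to check, and its Doob-transform ingredient does not help by itself: the ground-state transform by $u$ gives the drift Laplacian for the weight $u^2$, whose Bakry--\'Emery tensor is $\mathrm{Ric}-2\nabla^2\log u$, the same unsigned Hessian. The missing observation is that no absorption is needed, because for the time change the Hessian cancels identically. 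Take $\bar g=e^{2h}g$ and $\bar\mu=e^{2h}\mu=e^{-\psi}\mu_{\bar g}$ with $\psi=(n-2)h$ (in your notation, weight exponent $(n-2)\alpha$ relative to the $\tilde g$-volume). In $\mathrm{Ric}_N:=\mathrm{Ric}_{\bar g}+\bar\nabla^2\psi-\frac{1}{N-n}\,d\psi\otimes d\psi$, the term $-(n-2)\nabla^2h$ from the conformal Ricci formula is cancelled exactly by $\bar\nabla^2\psi$. For $N=n+q$ one gets $\mathrm{Ric}_N\ge(-\mathrm{Ric}_-+\Delta_gh-c(n,q)|dh|_g^2)\,g$, with $c(n,q)=(n-2)(n+q-2)/q$ and $\Delta_g\ge0$; this is Lemma~\ref{lem:traforule}. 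Choosing $h=\lambda^{-1}\log u$ with $\lambda=c(n,q)$ turns the bracket into $\Delta_gu/(\lambda u)-\mathrm{Ric}_-$, and Lemma~\ref{lem:2ndff} turns $\partial_\nu u=\lambda\mathrm{I\!I}_-u$ into convexity of $\partial M$. This is also the precise origin of the threshold: for $n\ge3$, finiteness of $N$ (which Bishop--Gromov needs) forces $\lambda>n-2$ strictly, while the Khas'minskii bound on the gauge needs $\lambda\gamma<1$.

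Step 1 is also mis-specified, and Step 2 as you wrote it relies on it. With the sign the geometry requires (the paper's non-negative $\Delta_g$), the homogeneous problem $\Delta_gu=\lambda\mathrm{Ric}_-u$, $\partial_\nu u=\lambda\mathrm{I\!I}_-u$ has no positive solution on a compact $M$ unless $\mathrm{Ric}_-\equiv0$ and $\mathrm{I\!I}_-\equiv0$. Indeed $\int_M\Delta_gu\,d\mu=-\int_{\partial M}\partial_\nu u\,d\sigma$, with the left side $\ge0$ and the right side $\le0$. Your fixed-time gauge $\E_x[\exp(\lambda A^\meas_T)]$ only solves a parabolic problem and gives no pointwise control on its image under the Schr\"odinger operator. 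What works is the gauge at an independent exponential time of rate $2\beta\sim1/T$ rather than at time $T$. Concretely, this is a Laplace transform $\varphi$ of the Robin Schr\"odinger flow started at $1$, which converges because that flow grows at most like $e^{\beta(t+T)}$ (Proposition~\ref{prop:semigroup}, essentially your Khas'minskii step). It solves $(A+2\beta)\varphi=2\beta$ (Corollary~\ref{cor:varphi}), so $\partial_\nu\varphi=\lambda\mathrm{I\!I}_-\varphi$ holds exactly, $\Delta_g\varphi-\lambda\mathrm{Ric}_-\varphi\ge-2\beta\varphi$, and $1\le\varphi\le2e^{\beta T}$. The error term $-2\beta\varphi$ is exactly where $-K/T$ comes from. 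Finally, the bi-Lipschitz transfer of the Poincaré inequality only yields a weak version with $B_{e^Cr}(x)$ on the right-hand side. Reaching the stated form needs the Jerison / Maheux--Saloff-Coste self-improvement on the length space $(M,g)$, which ``adjusting radii'' does not supply.
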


Since the previous local doubling property involves constants that depend only on $T$ and $\gamma$, Gromov's metric compactness theorem \cite[Proposition 5.2]{Gromov} directly implies the following precompactness result.

\begin{theorem}\label{th:precompactness}
    For $T>0$ and $\gamma \in [0,1/(n-2))$, consider the class $\mathcal{M}(n,T,\gamma)$ of isometry classes of pointed smooth metrically complete Riemannian manifolds $(M^n,g,o)$ with non-empty boundary satisfying \eqref{eq:NeumannDynkin}. Then $\mathcal{M}(n,T,\gamma)$ is precompact in the pointed Gromov--Hausdorff topology.
\end{theorem}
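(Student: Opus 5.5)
The plan is to apply Gromov's precompactness criterion \cite[Proposition 5.2]{Gromov} in its pointed form. A class of pointed proper length spaces is precompact in the pointed Gromov--Hausdorff topology as soon as, for every $R>0$ and $\varepsilon>0$, there is $N(R,\varepsilon)$ such that every ball $B_R(o)$ in every member can be covered by at most $N(R,\varepsilon)$ balls of radius $\varepsilon$. The task is therefore to produce such a uniform bound from the local doubling property of Theorem \ref{th:doubling&Poincaré}(1). The point is that the constant $C_D=C_D(n,T,\gamma)$ there depends only on the class $\mathcal{M}(n,T,\gamma)$.

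First I check the setting. Each $(M,g)$ with the intrinsic (path) distance is a length space. Metric completeness combined with local compactness, via Hopf--Rinow for length spaces, makes it proper, so closed balls are compact.

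Next comes the covering bound, by a standard packing argument. Fix $R>0$ and $\varepsilon\in(0,\sqrt T/4)$, and take a maximal $\varepsilon$-separated set $\{x_i\}$ in $B_R(o)$. The balls $B_{\varepsilon/2}(x_i)$ are disjoint, and the balls $B_\varepsilon(x_i)$ cover $B_R(o)$.

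To compare volumes, I use that $B_{\varepsilon/2}(x_i)$ and $B_R(o)$ both sit inside $B_{2R}(x_i)$. Only doubling at scales below $\sqrt T$ is available, so I chain it along a geodesic from $x_i$ to $o$: for $y$ with $d(y,z)\le r/2$ and $r<\sqrt T$, the inclusion $B_{r/2}(y)\subset B_r(z)$ together with doubling gives $V(z,r/2)\ge C_D^{-1}V(z,r)\ge C_D^{-1}V(y,r/2)$. Moving the center in steps of size $\varepsilon/4$ across distance at most $R$ loses a factor $C_D$ per step. This yields $V(o,\varepsilon/2)\le C_D^{\,c(R/\varepsilon)}V(x_i,\varepsilon/2)$.

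On the other side, $B_R(o)$ is covered by the balls $B_{\varepsilon/2}(y)$ centered on a maximal $\varepsilon/2$-net. Summing those volumes is circular, so instead I estimate directly:
\[
V(o,R)\le V(o,\varepsilon/2)\,C_D^{\,\lceil\log_2(2R/\varepsilon)\rceil}
\]
whenever $R<\sqrt T$. For $R\ge\sqrt T$, I first handle radius $\sqrt T/2$ and then cover $B_R(o)$ by a bounded number of such balls by induction on $R$, adding a margin $\sqrt T/4$ at each step, since a covering number bound at one scale propagates to larger radii. Combining the two estimates, disjointness gives
\[
\#\{x_i\}\le \frac{V(o,R+\varepsilon)}{\min_i V(x_i,\varepsilon/2)}\le N(n,T,\gamma,R,\varepsilon).
\]

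The main obstacle is that doubling is only local, with $r<\sqrt T$. Passing from local to global covering bounds requires the chaining and induction-in-$R$ argument above, and I must keep track that every constant depends only on $(n,T,\gamma,R,\varepsilon)$. Once this is done, Gromov's theorem applies directly and gives the precompactness of $\mathcal{M}(n,T,\gamma)$.
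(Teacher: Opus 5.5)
Your proposal is correct and follows the paper's route. The paper observes that the local doubling constant $C_D=C_D(n,T,\gamma)$ is uniform over $\mathcal{M}(n,T,\gamma)$ and then invokes Gromov's compactness theorem \cite[Proposition 5.2]{Gromov}, leaving implicit the standard passage from doubling at scales below $\sqrt{T}$ to uniform covering numbers at all radii; your chaining and induction-in-$R$ argument, which uses that the Riemannian distance is a proper length metric, fills in that step correctly. One simplification: once the induction gives a uniform cover of $B_R(o)$ by balls of radius $\sqrt{T}/2$, you can cover each of these by boundedly many $\varepsilon$-balls using doubling, so the final volume-ratio count is not needed.
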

For spaces that additionally satisfy a uniform local Poincaré inequality and upper and lower volume bounds, a Mosco--Gromov--Hausdorff precompactness theorem is available. This is obtained in the work of Carron, Mondello and the second author \cite[Theorem 1.17]{CMT}, for instance, after results from Kasue \cite[Theorem 3.4]{Kasue} and Kuwae and Shioya \cite[Theorem 5.2]{KuwaeShioya}. Applied to our context, this statement additionally provides Mosco convergence of the Dirichlet energies and convergence of the heat kernels, see Theorem \ref{th:precompactness2}.

Earlier precompactness results for manifolds with boundary include works by Kodani \cite{Kodani}, Anderson, Katsuda, Kurylev, Lassas, and Taylor \cite{AKKLT}, Wong \cite{Wong}, and Knox \cite{Knox}; we refer to the survey of Perales \cite{Peralessurvey} for a nice overview of these results. Further contributions were provided by Perales and Sormani \cite{PeralesSormani}, Perales \cite{Perales2016, Perales2020}, and Müller \cite{Muller}, among others. However, the uniform Neumann--Dynkin assumption proposed in this paper appears much weaker than those considered in the previous literature, especially because it is not pointwise.

Besides the previous statements, we also derive from \eqref{eq:NeumannDynkin} a Neumann spectral gap estimate. Recall that for a compact Riemannian manifold with boundary $(M^n,g)$, the eigenvalue problem for the Neumann Laplacian writes as
\begin{equation*}
\begin{cases}
\Delta_g u = \lambda u & \text{on $M$}, \\
\partial_\nu^g u = 0 & \text{on $\partial M$},
\end{cases}
\end{equation*}
where $\Delta_g$ is the non-negative Laplace--Beltrami operator and $\partial_\nu^gu$ is the outward normal derivative of $u$. The solutions of this problem form the Neumann spectrum, which is an unbounded non-decreasing sequence of real numbers starting with $\lambda_0(M,g)=0$. The Neumann spectral gap $\lambda_1(M,g)$ is the first positive number in this sequence. It can always be bounded from below in terms of the Neumann isoperimetric constant via Cheeger's inequality~\cite{MR402831}. However, in order to obtain explicit lower bounds, information on the geometry of both the interior and the boundary is needed in general.

Under interior Ricci curvature lower bound, an explicit lower bound on $\lambda_1(M,g)$ was obtained by Li and Yau for the case of a convex boundary \cite{MR834612} and by Chen when $\mathrm{I\!I}^g$ is bounded from below and the interior R-rolling ball condition holds \cite{MR993745}. More recently, Rose and Wei \cite{MR4520295} obtained an explicit lower bound via a Li-Yau gradient estimate under a contractive Dynkin condition (referred to as Kato condition there) on $\mathrm{Ric}_-$, a pointwise lower bound on $\mathrm{I\!I}^g$, an interior $R$-rolling ball condition and an upper bound on the diameter. In \cite{MR4523286}, Post, Ramos Olivé and Rose  provides a similar result under an $L^p$ bound on $\mathrm{Ric}_-$ and further assumptions, stronger than in~\cite{MR4520295}. Here we obtain the following.

\begin{theorem}[Neumann spectral gap]
Let $(M^n,g)$ be a compact smooth Riemannian manifold of diameter $D>0$ with a non-empty boundary. Assume there is $T>0$ such that
\begin{equation}\label{eq:D}\tag{D}
0< k_T(\meas) < \frac{1}{3(n-2)} \, \cdot
\end{equation}
Then there exists an explicit constant $C=C(k_T(\meas),T,D)>0$ such that
\begin{equation*}
\lambda_1(M,g) \ge C.
\end{equation*}
\end{theorem}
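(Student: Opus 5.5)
We follow the strategy announced in the abstract: transfer the problem to a weighted manifold with convex boundary and nonnegative Bakry--\'Emery curvature, prove the gap there, and pull the estimate back. First we build a gauge function. Let $\mathcal{G}$ solve the Neumann--Schr\"odinger-type problem
\begin{equation*}
\Delta_g \mathcal{G} - \meas\,\mathcal{G} \text{-type equation}, \qquad \text{concretely } \mathcal{G}(x)=\int_0^T\!\!\int_M p^N(s,x,y)\,d\meas(y)\,ds \text{ corrected by the heat semigroup},
\end{equation*}
or, more precisely, the Feynman--Kac/Dynkin construction $J=\exp(\alpha\,\cdot)$ where $\alpha$ is tuned so that $J$ is a positive supersolution, meaning $\Delta_g J \le \mathrm{Ric}_- J$ in the interior and $\partial_\nu J \ge \mathrm{I\!I}_- J$ on $\dM$ (in the weak sense with respect to $\meas$). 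By Khas'minskii's lemma, the smallness assumption $k_T(\meas)<1/(3(n-2))$ yields two-sided bounds
\begin{equation*}
1\le J\le \frac{1}{1-\theta(k_T(\meas))}\, e^{c\,t/T}
\end{equation*}
for times of order $T$. On a compact manifold we can instead obtain a time-independent version by using the resolvent, with constants depending on $T$ and $k_T(\meas)$ only. The factor $3$ in \eqref{eq:D} is expected to be exactly what allows raising $J$ to a power $\beta\approx 1$--$3$ while keeping $J^\beta$ integrable against the relevant kernel.

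Next we perform the conformal change and time change. Set $\tilde g = J^{2\beta} g$ and the weight $e^{-f}$ with $f$ chosen so that the Dirichlet form $\int |df|_g^2 d\mu$ coincides with the weighted form for $(\tilde g, e^{-f}\tilde\mu)$, up to the time-change density $J^{2\beta}$ on the $L^2$ side. A standard computation of $\mathrm{Ric}_{N}$ under a conformal change with weight shows that, for $\beta$ in the range allowed by $\gamma<1/(n-2)$, the supersolution property of $J$ gives $\mathrm{Ric}_{\tilde g,f,N}\ge 0$ for a suitable finite or negative $N$. Likewise, the boundary inequality gives $\mathrm{I\!I}^{\tilde g}\ge0$, because the second fundamental form transforms as $J^{\beta}(\mathrm{I\!I}^g + \beta\,\partial_\nu \log J\, g)$. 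Since $J$ is bounded above and below, $\tilde g$ is bi-Lipschitz to $g$ with explicit constants, so $\mathrm{diam}_{\tilde g}\le C D$.

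On the transformed space we apply the weighted Li--Yau/Zhong--Yang (or Bakry--Qian) estimate for weighted manifolds with convex boundary and $\mathrm{Ric}_{f,N}\ge0$, which gives $\tilde\lambda_1\ge \pi^2/(C D)^2$ (in the negative-$N$ case, the Kolesnikov--Milman or Wang CD$(0,N)$ gap bound with $N<0$). Finally we compare Rayleigh quotients: the energies agree exactly, and the $L^2$ norms differ by the factor $J^{2\beta}$, which lies between $1$ and $\sup J^{2\beta}$. Together with the variance characterisation of $\lambda_1$, this gives $\lambda_1(M,g)\ge \tilde\lambda_1 \,(\sup J)^{-2\beta}(\inf J)^{2\beta}$, a constant depending only on $k_T(\meas)$, $T$ and $D$.

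The main obstacle is the first step. We must produce a \emph{time-independent} gauge $J$ with supersolution property for the measure $\meas$, which includes the singular boundary part $\mathrm{I\!I}_-\sigma$, and we must get explicit sup bounds from the Neumann Dynkin quantity alone. We would handle this with the resolvent $(\Delta_g+\lambda)^{-1}$ with $\lambda\sim 1/T$, using $\|\,(\Delta+\lambda)^{-1}\meas\|_\infty\lesssim k_T(\meas)/(1-e^{-\lambda T})$. We would absorb the extra $\lambda J$ term by a small loss, which is presumably where the assumption $k_T(\meas)>0$ and the constant $3$ enter.
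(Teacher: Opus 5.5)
\textbf{Gap in the key curvature step.} Your overall architecture matches the paper's:
\begin{itemize}
\item a gauge built from a Feynman--Kac/resolvent construction. The paper takes $\varphi=2\beta\int_0^\infty e^{-2\beta t}I(t)\,dt$, which solves $(\Delta_g-\lambda\,\mathrm{Ric}_-)\varphi+2\beta\varphi=2\beta$ with $\partial_\nu\varphi=\lambda\,\mathrm{I\!I}_-\varphi$ and $1\le\varphi\le 2e$, for $\lambda=(1-e^{-1})/k_T(\meas)$ and $\beta=1/T$;
\item the time change $(e^{2h}g,e^{2h}\mu)$ with $h=\lambda^{-1}\log\varphi$;
\item boundary convexity from the Robin condition;
\item the Rayleigh comparison $\lambda_1\ge\overline{\lambda}_1\inf e^{2h}$.
\end{itemize}
The gap is your central claim that the transformed space has \emph{non-negative} Bakry--\'Emery curvature. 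The only information you have is $\mathrm{Ric}\ge-\mathrm{Ric}_-$ and $\mathrm{I\!I}\ge-\mathrm{I\!I}_-$. With that, the time-change rule (Lemma~\ref{lem:traforule}) and Lemma~\ref{lem:2ndff} require
\begin{itemize}
\item $\Delta_g h\ge\mathrm{Ric}_-+c(n,q)|dh|_g^2$ in $\intM$, and
\item $\partial_\nu h\ge\mathrm{I\!I}_-$ on $\dM$.
\end{itemize}
For $J=e^{\lambda h}$ and $\Delta_g\ge0$, this is the exact supersolution property $\Delta_g J\ge\lambda\,\mathrm{Ric}_-J$, $\partial_\nu J\ge\lambda\,\mathrm{I\!I}_-J$. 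Your displayed $\Delta_g J\le\mathrm{Ric}_-J$ points the wrong way and omits $\lambda$. That factor must equal $c(n,q)$ so the gradient term cancels. On a compact manifold this is impossible as soon as $\meas\neq0$. Integrating and using the divergence theorem,
\[
\int_M\mathrm{Ric}_-\,d\mu+c(n,q)\int_M|dh|_g^2\,d\mu\le\int_M\Delta_g h\,d\mu=-\int_{\dM}\partial_\nu h\,d\sigma\le-\int_{\dM}\mathrm{I\!I}_-\,d\sigma ,
\]
so $\meas(M)\le0$, contradicting $k_T(\meas)>0$. Choosing $N<0$ does not help, because the coefficient $(n-2)(N-2)/(N-n)$ of $|dh|_g^2$ is still non-negative. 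So the ``small loss'' from the resolvent cannot be absorbed. The slack $-2\beta\varphi$ becomes the curvature lower bound $-2\beta/\lambda$, and you only get $\mathrm{BE}(-K/T,N)$ with $K=2\beta T/\lambda>0$.

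\textbf{Consequence for the final bound.} The Zhong--Yang/Li--Yau estimate $\tilde\lambda_1\ge\pi^2/\mathrm{diam}^2$ is therefore unavailable. Your endpoint, $\lambda_1\ge c/D^2$ with $c$ depending only on $k_T(\meas)$ and $T$, is in fact false. Take $M=N_\ell\times[0,1]$, where $N_\ell$ is a closed hyperbolic surface with a separating geodesic of length $\ell\to0$ and the rest of the geometry fixed. Then $\mathrm{I\!I}=0$ and $\mathrm{Ric}_-\equiv1$, so $\meas=\mu$ and $k_{1/10}(\meas)=1/10$ for every $\ell$; this satisfies \eqref{eq:D} with $n=3$. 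Yet $\lambda_1(M)=\lambda_1(N_\ell)\lesssim\ell$ while $D\approx2\log(1/\ell)$. A correct bound must degenerate exponentially in $D^2/T$, which is why $T$ enters $C(k_T(\meas),T,D)$. The paper uses Chen--Wang's comparison for convex boundary and $\mathrm{BE}(-K/T,\infty)$, giving $\overline{\lambda}_1\ge\frac{\pi^2K}{8T(\exp(KD'^2/(8T))-1)}$ with $D'\le e^{\sup h}D$ and $\sup h\le 4k_T(\meas)$.

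\textbf{Role of the constant $3$.} Neither the factor $3$ nor the assumption $k_T(\meas)>0$ concerns integrability of $J^\beta$. One needs $\lambda=(1-e^{-1})/k_T(\meas)$, which is defined because $k_T(\meas)>0$, to exceed $n-2$, so that $\lambda=c(n,q)$ has a solution $q\in(0,\infty)$. At the same time $k_T(\lambda\meas)=1-e^{-1}<1$ keeps the Robin--Schr\"odinger problem solvable. The $3$ only makes $N$ and $K$ explicit.
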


We also obtain an explicit upper bound on the logarithmic Sobolev constant, see Theorem \ref{thm:logSobolev}.

Our main technical tool to establish the previous results is a time-change transformation. More precisely, we prove that under \eqref{eq:NeumannDynkin} the manifold $(M^n,g)$ is bi-Lipschitz equivalent to a weighted Riemannian manifold with convex boundary satisfying a Bakry--\'Emery curvature-dimension condition, see Theorem \ref{thm:timechange}. This allows us to derive several consequences on the original space from the time-changed one. To construct the latter space, we adapt the approach developed by Carron, Mondello, and the second author in \cite{MR3751359} for geodesically complete manifolds (i.e., without boundary) to the present setting. More precisely, the time change function is obtained by solving a Schrödinger equation with potential $\mathrm{Ric}_-$ and Robin boundary condition involving $\mathrm{I\!I}_-$. To this purpose, we construct the associated semigroup and its generator from a perturbed Dirichlet form via the KLMN theorem (see e.g.~\cite[Theorem B.10]{MR3751359}).

Note that a time change producing boundary convexity and interior Bakry-\'{E}mery condition was constructed by Wang \cite{MR2344874} via different methods. He used it to derive an explicit lower bound on the Neumann spectral gap. However, his geometric construction required a pointwise lower bound on the Ricci curvature, a pointwise upper bound on the sectional curvature, pointwise lower and upper bounds on  $\mathrm{I\!I}^g$, and assumed that the distance between $\partial M$ and its focal cut locus is positive. In contrast, our analytic approach replaces the pointwise bounds on the Ricci/sectional curvature and the second fundamental form with a weaker Neumann–Dynkin condition, and removes any additional geometric assumption.

We conclude the introduction by briefly mentioning further related works: In \cite{boldtetal}, 
 Boldt, Güneysu and Pigola build a conformal change yielding convex boundary and bounded geometry via flatzoomers. In the non-smooth setting, Lierl and Sturm \cite{MR3742815} study convexification and preservation of the RCD condition via conformally equivalent metrics. The behaviour of synthetic curvature--dimension conditions
under time change has been investigated by 
 Sturm \cite{SturmJFA2018,MR4182834} and by Han and Sturm \cite{MR4386845}. 

\subsubsection*{Acknowledgments.} MB is funded by the Deutsche Forschungsgemeinschaft (DFG, German Research Foundation) under Germany's Excellence Strategy – EXC-2047/2 – 390685813. DT is funded by the Research Foundation – Flanders (FWO) via the Odysseus II programme no.~G0DBZ23N. He would also like to thank the Isaac Newton Institute for Mathematical Sciences, Cambridge (UK), for support and hospitality during the programme \textit{Geometric spectral theory and application} where partial work on this paper was undertaken, supported by EPSRC grant no EP/Z000580/1.
\section{Preliminaries}

Throughout this paper, we consider a connected smooth manifold $M$ with a non-empty boundary $\partial M$. We always regard $\partial M$ as a smooth embedded hypersurface of $M$ without boundary, as ensured by \cite[Theorem 5.11]{Leesmoothmanifolds}. Its second fundamental form is given by $$\mathrm{I\!I}^g(X,Y):= \langle \nabla_X^g \nu_g, Y \rangle_g$$ for all smooth vector fields $X,Y \in T \partial M$, where $\nu_g$ is the outward unit normal and $\nabla^g$ is the Levi-Civita connection. We let $\intM$ be the interior of $M$. We endow $M$ with a smooth Riemannian metric $g$ and denote by $\mu$ the associated volume measure. Assuming that the induced Riemannian distance is complete, we refer to $(M,g)$ as a metrically complete smooth Riemannian manifold with boundary.

For any $p \in [1,+\infty)$, we let $L^p(M,g)$ (resp.~$L^p(\partial M,g)$) be the space of equivalence classes of measurable functions on $M$ (resp.~$\dM$) whose $p$-th power is $\mu$-integrable (resp.~$\sigma$-integrable), with associated norm $\|\cdot\|_{p}$ (resp.~$\|\cdot\|_{p,\sigma}$).  We write $L^\infty(M)$ (resp.~$L^\infty(\dM)$) for the set of essentially bounded equivalence classes of measurable functions on $M$ (resp.~$\dM$), with associated norm $\|\cdot\|_\infty$ (resp.~$\|\cdot\|_{\infty,\sigma}$). Note that the latter spaces do not depend on $g$ while the former might. Depending on the context, we may wish to emphasize that $\|\cdot\|_{p}$ is taken with respect to $\mu$, in which case we write $\|\cdot\|_{p,\mu}$. For any $p \in [1,+\infty)$, we let
\[
\|T\|_{p,p} := \sup_{f \in L^p(M,g)\backslash \{0\}} \frac{\|Tf\|_p}{\|f\|_p}
\]
denote the operator norm of a bounded operator $T : L^p(M,g) \to L^p(M,g)$. We use the same notation for the case $p=+\infty$.

For any positive integer $k$,
we let $H^k(M,g)$ be the Sobolev space of functions in $L^2(M,g)$ whose weak derivatives of order at most $k$ are in $L^2(M,g)$ as well, with norm $\|\cdot\|_{H^k}$, and we denote by $H^{-k}(M,g)$ the dual space, with duality pairing $\langle \cdot,\cdot \rangle_{H^{-k},H^k}$.

\subsection{Neumann heat kernel.} We let $p^N : (0,+\infty) \times M \times M  \to (0,+\infty)$ be the Neumann heat kernel of $(M,g)$. This is the minimal fundamental solution of the heat equation with Neumann boundary conditions
\begin{equation}\label{eq:Neumannproblem}
\begin{cases}
\partial_t u = -\Delta_g u & \text{on $\R_+ \times M$},\\
\partial_\nu^g u = 0 & \text{on $\R_+ \times \partial M $,}
\end{cases}
\end{equation}
where $\Delta_g$ is the non-negative Laplace--Beltrami operator and $\partial_\nu^g u = g(\nabla^g u,\nu_g)$. There are various ways to establish the existence of this heat kernel. If $(M,g)$ is compact, it may be constructed as the series
\begin{equation}\label{eq:expansion}
p^N(t,x,y) = \sum_{i=1}^{+\infty} e^{-t\lambda_i(M,g)}\varphi_i(x)\varphi_i(y)
\end{equation}
where $\{\lambda_i(M,g)\}$ is the Neumann spectrum of $(M,g)$ and $\{\varphi_i\}$ is an orthonormal basis of $L^2(M,g)$ made of corresponding eigenfunctions, see e.g.~\cite[Chapters 10 \& 11]{Li}. If $(M,g)$ is non-compact, then $p^N$ may be obtained as in \cite[Theorem 3.6]{Dodziuk}.  More precisely, for any relatively compact open subset $\Omega \subset M$ with smooth boundary, consider the heat equation with mixed boundary conditions
\begin{equation}\label{eq:NeumannproblemOmegak}
\begin{cases}
\partial_t u = -\Delta_g u & \text{on $\R_+ \times \Omega$},\\
\partial_\nu^g u = 0 & \text{on $\R_+ \times (\partial \Omega \cap \partial M)$,}\\
u = 0 & \text{on $\R_+ \times (\partial \Omega \cap \intM)$.}
\end{cases}
\end{equation}
This equation admits a heat kernel $p_\Omega$ for which a series representation analogous to \eqref{eq:expansion} holds. Then the global Neumann heat kernel $p^N$ of $(M,g)$ is obtained as the unique monotone limit of the mixed boundary heat kernels $\{p_{\Omega_k}\}$  of any exhaustion of $M$ by relatively compact open sets $\{\Omega_k\}$ with smooth boundaries. It follows from this construction that $p^N$ is symmetric, smooth with respect to each variable, and that it satisfies the sub-Markovian property: for any $x \in M$ and $t>0$,
\begin{equation}\label{eq:subMarkov}
\int_M p^N(t,x,y) d\mu(y) \le 1,
\end{equation}
and the Chapman--Kolmogorov identity: for any $x,y\in M$ and $t,s>0$,
\begin{equation}\label{eq:ChapKol}
p^N(t+s,x,y) = \int_M p^N(t,x,z) p^N(s,z,y) d\mu(z).
\end{equation}
Moreover, there exists $c>0$ depending on $M$ such that for any $x \in M$ and $t>0$,
\begin{equation}\label{eq:subMarkov_boundary}
    \int_{\partial M} p^N(t,x,y) d\si(y) \le \frac{c}{\sqrt{t}} + c.
\end{equation}
This follows from \cite[formula (1.9')]{MR1076956} and a patching argument.

\begin{remark}\label{rem:bounds}
    Note that \eqref{eq:boundKL} is a direct consequence of \eqref{eq:subMarkov} and \eqref{eq:subMarkov_boundary}.
\end{remark}

\subsection{Neumann semigroup and trace operator.}
We denote by $H$ the Neumann Laplacian of $(M,g)$. This is the $L^2$-self-adjoint operator defined by
\[
\mathrm{dom}(H) = \{ u \in H^{2}(M,g) \, : \, \partial_\nu^gu=0 \,  \text{ $\sigma$-a.e.~on $\partial M$} \}, \qquad  Hu = \Delta_g u,
\]
where $\partial_\nu^gu = g(\mathrm{tr}(\nabla^g u), \nu_g)$ for any $u \in H^{2}(M,g)$, and $\mathrm{tr} : H^1(M,g) \to L^2(\partial M,g)$ is the trace operator. The classical Hille--Yosida theorem implies that $-H$ generates a strongly continuous semigroup of bounded self-adjoint operators $(e^{-tH})_{t\ge0}$ acting on $L^2(M,g)$ such that for any $u\in L^2(M,g)$,
\begin{equation*}
[0,\infty) \ni t \mapsto u(t):=e^{-tH} u
\end{equation*}
is the uniquely determined continuous path with $u(0)=u$ which is differentiable in $(0,\infty)$ and satisfies the heat equation with Neumann boundary condition \eqref{eq:Neumannproblem}.  This semigroup extends to a contractive semigroup acting on $L^p(M,g)$ for any $p \in [1,+\infty]$. Moreover, for any $t>0$, $u \in L^2(M,g)$ and $x \in M$.
\[
e^{-tH} u(x) = \int_M p^N(t,x,y) u(y) d \mu(y).
\]
The family $(e^{-tH})_{t \ge 0}$ is called the Neumann semigroup of $(M,g)$.

The trace operator $\mathrm{tr}: H^1(M,g)\to L^2(\partial M,g)$ is bounded with norm depending on $(M,g)$. It admits a formal adjoint $\mathrm{tr}^*:L^2(\partial M,g)\to H^{-1}(M,g)$ defined by: $$\langle \mathrm{tr}^* f,h\rangle_{H^{-1},H^1} = \int_{\partial M} f(x) \ [\mathrm{tr}  \, h] (x) d \sigma(x)$$
for any $(f,h) \in L^2(\partial M,g)\times H^1(M,g)$. We will need the following lemma that states that the restriction of the Neumann semigroup to the boundary $\partial M$ coincides with its conjugation with the trace operator. We use the same notation to denote the operator $e^{-tH}$ acting on $H^1(M,g)$ and its adjoint acting on $H^{-1}(M,g)$, and we write $\circ$ for the composition of operators.

\begin{lemma}\label{lem:conjugated_heat_flow}
For any $t>0$, $f \in L^2(\partial M,g)$ and $\sigma$-a.e.~$x \in \partial M$,
  \begin{equation}\label{eq:conjugated_heat_flow}
\mathrm{tr} \circ e^{-tH} \circ \mathrm{tr}^* f (x) = \int_{\dM} p^N(s,x,y) f(y) d\si(y). 
\end{equation}
\end{lemma}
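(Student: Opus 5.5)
The identity is an equality in $L^2(\partial M,\sigma)$, so I will prove it by duality: pair both sides with an arbitrary test function and check that the pairings agree. Note that the $s$ in the statement should read $t$.

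\textbf{Step 1: interpret the left-hand side.} For $f\in L^2(\partial M,g)$ we have $\mathrm{tr}^*f\in H^{-1}(M,g)$. The operator $e^{-tH}$ on $H^{-1}$ is the adjoint of $e^{-tH}$ on $H^1$, and for $t>0$ it maps $H^{-1}$ into $H^1$ by parabolic smoothing. Hence the trace of $e^{-tH}\circ\mathrm{tr}^*f$ makes sense. Let $h\in L^2(\partial M,g)$; I may assume $h$ is smooth with compact support in $\partial M$, since such functions are dense. Then
\[
\int_{\dM}[\mathrm{tr}\circ e^{-tH}\circ\mathrm{tr}^*f]\,h\,d\si=\langle \mathrm{tr}^*h, e^{-tH}\mathrm{tr}^*f\rangle .
\]
Splitting the semigroup as $e^{-tH}=e^{-tH/2}e^{-tH/2}$ and using self-adjointness, this equals
\[
\big(e^{-tH/2}\mathrm{tr}^*h,\; e^{-tH/2}\mathrm{tr}^*f\big)_{L^2(M)} .
\]

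\textbf{Step 2: compute $e^{-sH}\mathrm{tr}^*f$ pointwise.} For $u\in L^2(M)$,
\[
\big(e^{-sH}\mathrm{tr}^*f,u\big)=\langle \mathrm{tr}^*f, e^{-sH}u\rangle=\int_{\dM} f(y)\int_M p^N(s,y,z)u(z)\,d\mu(z)\,d\si(y).
\]
Here I use that $e^{-sH}u$ is continuous up to the boundary, so its trace is its restriction, given by the kernel formula at $y\in\partial M$. By Fubini and symmetry of $p^N$,
\[
e^{-sH}\mathrm{tr}^*f(z)=\int_{\dM}p^N(s,z,y)f(y)\,d\si(y).
\]
Fubini is justified because \eqref{eq:subMarkov_boundary} together with Cauchy–Schwarz bounds this integral in $L^2(M)$.

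\textbf{Step 3: conclude.} Insert the formula from Step 2 into the pairing of Step 1 and apply Chapman--Kolmogorov \eqref{eq:ChapKol}:
\[
\int_{\dM}\int_{\dM}h(x)f(y)\int_M p^N(t/2,x,z)p^N(t/2,z,y)\,d\mu(z)\,d\si(x)\,d\si(y)=\int_{\dM}h(x)\int_{\dM}p^N(t,x,y)f(y)\,d\si(y)\,d\si(x).
\]
Since $h$ ranges over a dense set, the two sides of the lemma agree $\sigma$-a.e.

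\textbf{Main obstacle.} The delicate point is justifying the exchanges of integrals and the regularity claims.
\begin{itemize}
\item Trace and kernel formula: I need that for $t>0$ the trace of $e^{-tH}$ applied to an $H^{-1}$ element equals the pointwise restriction of the kernel representation. For $u\in L^2(M)$ this follows from smoothness of $p^N$ up to the boundary.
\item Integrability: $\int_{\partial M}\int_{\partial M}p^N(t,x,y)|f(y)||h(x)|\,d\si\,d\si$ is finite by Schur's test, using \eqref{eq:subMarkov_boundary} and symmetry of $p^N$.
\item Non-compact case: there I would first work with the exhaustion kernels $p_{\Omega_k}$ and pass to the limit by monotone convergence, treating $f,h\ge0$ first and then general $f,h$ by linearity.
\end{itemize}
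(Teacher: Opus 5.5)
Your proof is correct, and you are right that $s$ should read $t$. It closes the argument differently from the paper, though.

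Your Step 2 is the paper's whole computation. There, $e^{-tH}\circ\mathrm{tr}^*f$ is paired with test functions on $M$, the semigroup is moved across, and the kernel formula for $\mathrm{tr}(e^{-tH}u)$ plus Fubini identify $e^{-tH}\circ\mathrm{tr}^*f$ $\mu$-a.e.\ with $F_t(x):=\int_{\partial M}p^N(t,x,y)f(y)\,d\sigma(y)$. The paper then concludes in one line: it asserts, via continuity of $p^N$ up to the boundary, that the trace of $F_t$ is its pointwise restriction to $\partial M$. Since $\mu(\partial M)=0$, the $\mu$-a.e.\ identification alone says nothing on $\partial M$, so this step carries real content.

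You instead test against boundary functions, split $e^{-tH}=e^{-tH/2}e^{-tH/2}$ on $H^{-1}$, apply Step 2 at time $t/2$ to both $\mathrm{tr}^*h$ and $\mathrm{tr}^*f$, and finish with \eqref{eq:ChapKol} at boundary points. In effect this uses $F_t=e^{-tH/2}F_{t/2}$ with $F_{t/2}\in L^2(M)$. So the only boundary regularity you need is that $\mathrm{tr}(e^{-sH}u)$ is given by the kernel formula for $u\in L^2(M)$, which the paper also uses. You do not need continuity up to $\partial M$ of a kernel integrated against an $L^2(\partial M,\sigma)$ density, which for non-compact $\partial M$ would require extra uniform control of $p^N(t,x,\cdot)$.

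The price is the spectral-calculus facts about $e^{-tH}$ on $H^{-1}$ and the Schur-test integrability checks based on \eqref{eq:subMarkov} and \eqref{eq:subMarkov_boundary}, which you sketch adequately. The paper's version is shorter.

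The exhaustion argument in your last bullet is unnecessary. \eqref{eq:subMarkov}, \eqref{eq:ChapKol} and \eqref{eq:subMarkov_boundary} are stated for the global kernel $p^N$, so your argument already covers the non-compact case as written.
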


\begin{proof}
For any $h\in H^1(M,g)$,
\begin{align*}
\langle e^{-tH}\circ \mathrm{tr}^* f, h\rangle_{H^{-1},H^1}& = \langle \mathrm{tr}^* f, e^{-tH} h\rangle_{H^{-1},H^1} \\
& = \int_{\partial M} f(y)\,  [\mathrm{tr}\circ e^{-tH} h] (y) d\si(y) \\
&= \int_{\partial M} f(y) \int_M p^N(s,y,x) h(x) d\mu(x) d\si(y)\\
&=\int_M h(x) \int_{\partial M} p^N(s,x,y) f(y) d\si(y) d\mu(x).
\end{align*}
Then $e^{-tH}\circ \mathrm{tr}^* f$ coincides $\mu$-a.e.~with the $L^2$ function 
\begin{equation*}
M \ni x \mapsto \int_{\partial M} p^N(s,x,y) f(y) d\si(y),
\end{equation*}
so that the continuity of $p^N$ up to the boundary implies the desired result.
\end{proof}

\subsection{Contractive Neumann--Dynkin class.} Inspired by \cite{MR1378151,MR3751359,MR4403622}, we single out the following class of Radon measures.

\begin{definition}
The contractive Neumann--Dynkin class $\dD(M,g)$ is the class of non-negative Radon measures $\alpha$ on $M$ for which there exists $T>0$ such that
\[
k_T (\alpha) := \sup_{x\in M} \int_0^T \int_M p^N(s,x,y) d\alpha(y) ds <1.
\]
\end{definition}

\begin{remark}
If $n \ge 3$ and $(M^n,g)$ satisfies \eqref{eq:NeumannDynkin} for some $T>0$ and $\gamma \in [0,1/(n-2))$, then $\meas \in \dD(M,g)$.
\end{remark}

We follow the lines of \cite[Lemmas VI.3 and VI.4]{MR3751359} to prove the next useful characterization of the contractive Neumann--Dynkin class.

\begin{lemma}
For any non-negative Radon measure $\alpha$ on $M$, the following statements are equivalent:
\begin{itemize}
\item[i)] $\alpha \in\dD(M,g)$,
\item[ii)] there exists $r>0$ such that
\begin{equation}\label{eq:chara}
A_r(\alpha) := \sup_{x\in M} \int_0^\infty e^{-rs} \int_M p^N(s,x,y) d\alpha(y) ds < 1.
\end{equation}
\end{itemize}
\end{lemma}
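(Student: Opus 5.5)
The plan is to prove the two implications separately. Throughout we write $P_s\alpha(x) := \int_M p^N(s,x,y)\, d\alpha(y)$, so that $k_T(\alpha) = \sup_x \int_0^T P_s\alpha(x)\,ds$ and $A_r(\alpha) = \sup_x \int_0^\infty e^{-rs} P_s\alpha(x)\,ds$. The two ingredients are the Chapman--Kolmogorov identity \eqref{eq:ChapKol} and the sub-Markovian property \eqref{eq:subMarkov}. Together with Fubini/Tonelli (everything is non-negative), they give for any $t \ge 0$ and $x\in M$
\[
\int_t^{t+T} P_s\alpha(x)\,ds = \int_M p^N(t,x,z) \int_0^T P_s\alpha(z)\,ds\,d\mu(z) \le k_T(\alpha),
\]
with the convention $p^N(0,x,\cdot)=\delta_x$ when $t=0$.

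\textbf{ii) $\Rightarrow$ i).} For $T>0$ and $s\in[0,T]$ we have $e^{-rs}\ge e^{-rT}$. Hence
\[
\int_0^T P_s\alpha(x)\,ds \le e^{rT}\int_0^T e^{-rs}P_s\alpha(x)\,ds \le e^{rT} A_r(\alpha).
\]
Since $A_r(\alpha)<1$, we can choose $T>0$ small enough that $e^{rT}A_r(\alpha)<1$. Taking the supremum over $x$ then gives $k_T(\alpha)<1$.

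\textbf{i) $\Rightarrow$ ii).} Fix $T$ with $k_T(\alpha)<1$. Split $(0,\infty)$ into the intervals $[jT,(j+1)T)$ for $j \ge 0$. On the $j$-th interval, bound $e^{-rs}\le e^{-rjT}$ and apply the displayed estimate with $t=jT$:
\[
\int_0^\infty e^{-rs}P_s\alpha(x)\,ds \le \sum_{j\ge 0} e^{-rjT}\,k_T(\alpha) = \frac{k_T(\alpha)}{1-e^{-rT}}.
\]
This bound is not sharp enough on its own, because the $j=0$ term already contributes the full $k_T(\alpha)$ and the tail then pushes the total up. To fix this, refine the splitting: keep the interval $[0,T]$ for $j=0$, and compare the tail with $\sum_{j\ge1} e^{-rjT}k_T(\alpha) = k_T(\alpha)\,e^{-rT}/(1-e^{-rT})$. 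For the $j=0$ term, use $e^{-rs}\le 1$ there. This gives
\[
A_r(\alpha)\le k_T(\alpha)\Bigl(1+\frac{e^{-rT}}{1-e^{-rT}}\Bigr) = \frac{k_T(\alpha)}{1-e^{-rT}}.
\]
The right-hand side tends to $k_T(\alpha)<1$ as $r\to\infty$. Choosing $r$ large enough therefore gives $A_r(\alpha)<1$.

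The only delicate point is justifying the shifted-interval estimate, since $\alpha$ is a Radon measure that may charge $\partial M$ (for instance $\mathrm{I\!I}_-\sigma$). We need Tonelli to exchange the $z$, $y$ and $s$ integrals in
\[
\int_M p^N(t+s,x,y)\,d\alpha(y) = \int_M p^N(t,x,z)\int_M p^N(s,z,y)\,d\alpha(y)\,d\mu(z).
\]
This holds pointwise for $y\in\partial M$ as well, because $p^N$ is continuous up to the boundary and the Chapman--Kolmogorov identity holds pointwise. Non-negativity of all the integrands makes the exchange legitimate. Finally, sub-Markovianity bounds $\int_M p^N(t,x,z)\,d\mu(z)\le 1$ when we take the supremum over $z$.
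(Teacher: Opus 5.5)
Your proof is correct and follows essentially the same route as the paper: both directions rest on the two-sided estimate $(1-e^{-rT})A_r(\alpha)\le k_T(\alpha)\le e^{rT}A_r(\alpha)$. The left inequality comes from Chapman--Kolmogorov and sub-Markovianity on the intervals $[jT,(j+1)T)$, and the right one from $e^{-rs}\ge e^{-rT}$ on $[0,T]$. Your ``refinement'' step is unnecessary: the first bound $k_T(\alpha)/(1-e^{-rT})$ already tends to $k_T(\alpha)<1$ as $r\to\infty$, and the refined bound you obtain is identical to it.
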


\begin{proof}
The statement is a consequence of the following fact: for any $r,t >0$, 
\begin{align}\label{eq:A(alpha)}
&\lp 1-e^{-rt}\rp A_r(\alpha) \le k_t(\alpha) \le e^{rt} A_r(\alpha).
\end{align}
Indeed, this implies that
\[
\begin{cases}
    A_r(\alpha) \le (1-e^{-rt})^{-1}k_t(\alpha) \stackrel{r\to +\infty}{\longrightarrow} k_t(\alpha) & \text{for any $t>0$},\\
    k_t(\alpha) \le e^{rt}A_r(\alpha)\stackrel{t\to 0}{\longrightarrow} A_r(\alpha) & \text{for any $r>0$.}
\end{cases}
\]

The first inequality in \eqref{eq:A(alpha)} is obtained via the following computation: for any $x \in M$,
\begin{align*}
 & \phantom{=} \int_0^\infty e^{-rs} \int_M p^N(s,x,y) d\alpha(y) ds & \\
&= \sum_{k=0}^\infty \int_{kt}^{(k+1)t} e^{-rs} \int_M p^N(s,x,y) d\alpha(y) ds & \\
&= \sum_{k=0}^\infty \int_{0}^{t} e^{-r(s+tk)} \int_M p^N(s+tk,x,y) d\alpha(y) ds & \\
&=\sum_{k=0}^\infty e^{-rkt} \int_M p^N(kt,x,z) \int_0^t e^{-rs} \int_M  p^N(s,z,y)d\alpha(y)ds d\mu(z) & \text{by \eqref{eq:ChapKol}}\\
&\le \lp \sum_{k=0}^{\infty} e^{-rkt} \rp \sup_{z\in M} \int_0^t e^{-rs} \int_M p^N(s,z,y) d\alpha(y) ds & \text{by \eqref{eq:subMarkov}}\\
&\le \frac{1}{1-e^{-rt}} \sup_{z\in M} \int_0^t  \int_M p^N(s,z,y) d\alpha(y) ds = \frac{k_t(\alpha)}{1-e^{-rt}} \, \cdot
\end{align*}
The second inequality is proved as follows:
\begin{align*}
e^{rt} A_r(\alpha) & = e^{rt} \sup_{x\in M} \int_0^\infty e^{-rs} \int_M p^N(s,x,y) d\alpha(y)ds\\
&\ge \sup_{x\in M} \int_0^t e^{r(t-s)} \int_M p^N(s,x,y) d\alpha(y)ds\\
&\ge \sup_{x\in M} \int_0^t \int_M p^N(s,x,y) d\alpha(y)ds = k_t(\alpha).
\end{align*}
\end{proof}

\begin{remark}
    If $\alpha = v \sigma$ for some Borel function $v : \partial M \to \mathbb{R}$, then i) and ii) are equivalent to:
\begin{itemize}
    \item[iii)] there exists $t>0$ such that
\begin{equation*}
\sup_{x\in M} \E^x\left[\int_0^t |v(B_s)| dL_s\right] <\frac{1}{2}
\end{equation*}
\end{itemize}
where $(B_s)_s$ is the reflected Brownian motion\footnote{strictly speaking, it is standard reflected Brownian motion sped up in time by a factor 2 for consistency with the remainder of this manuscript} on $M$ and $(L_s)_s$ is its local time at $\dM$. Indeed, the equivalence between i) and iii) is directly obtained from the work of Papanicolaou \cite[Proposition 1.1]{MR1076956}. This will not be used in the remainder of this work, but we mention it to clarify the connection with \cite{MR1076956} where a boundary Kato class was defined in terms of the reflected Brownian motion and its local time at the boundary.
\end{remark}

\section{Robin Schrödinger equation}
Recall that $(M,g)$ is a smooth metrically complete  Riemannian manifold with boundary. For Borel functions $w:M\to\R_+$ and $v:\dM\to\R_+$ the Schrödinger equation with potential $w$ and Robin boundary condition $v$ is
\begin{equation}\label{eq:Robinproblem1}
\begin{cases}
\Delta_g f - w f=  0&  \text{on $M$},\\
\partial_\nu^g f = v & \text{on $\partial M $.}
\end{cases}
\end{equation}
The goal of the section is to show that this problem admits a solution provided the measure $m = w \mu + v \sigma$ belongs to the contractive Neumann--Dynkin class $\mathcal{D}(M,g)$. We begin with the following lemma which is a combined interior and boundary variant of~\cite[Lemma VI.7]{MR3751359}.

\begin{lemma}\label{lem:batuadap3}
For any $r>0,$ any Borel functions $w:M\to\R,\ v:\partial M\to\R$ and any $f\in H^1(M,g)$ one has
\begin{equation*}
\|\sqrt{|w|}f\|_{2,\mu}^2 + \|\sqrt{|v|}\mathrm{tr}(f)\|_{2,\sigma}^2\le A_r(m) \left( \|df\|_{2,\mu}^2 + r\|f\|_{2,\mu}^2 \right)
\end{equation*}
where $m:=|w|\mu+|v|\si$ and $A_r(m)$ is defined in \eqref{eq:chara}.
\end{lemma}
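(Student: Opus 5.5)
Our approach is the standard duality argument behind \cite[Lemma VI.7]{MR3751359}: the resolvent $(H+r)^{-1}$ is used to turn the form bound into an $L^\infty$ bound, which is exactly what $A_r(m)$ measures. Set $R_r := (H+r)^{-1} = \int_0^\infty e^{-rs} e^{-sH} ds$. By density and Fatou it suffices to treat $f$ in a core, say $f = R_r u$ with $u \in L^2(M,g)$, and to truncate $|w|,|v|$ to bounded functions with compact support (monotone convergence then gives the general case). We may also replace $w,v$ by $|w|,|v|$.

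\textbf{Step 1: the operator $S := J R_r J^*$.} Consider the Hilbert space $\mathcal{K} := L^2(M,|w|\mu) \oplus L^2(\partial M,|v|\sigma)$, i.e.\ $L^2(M,m)$. Let $J : H^1(M,g)\to \mathcal{K}$ be $Jf = (f, \mathrm{tr} f)$. The claimed inequality says exactly that $\|Jf\|_{\mathcal{K}}^2 \le A_r(m)\, \eE_r(f)$, where $\eE_r(f) = \|df\|_2^2 + r\|f\|_2^2 = \|(H+r)^{1/2} f\|_2^2$. Equivalently, we need $\|J (H+r)^{-1/2}\|^2 \le A_r(m)$, i.e.\ $\|J R_r J^*\|_{\mathcal{K}\to\mathcal{K}} \le A_r(m)$.

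Using Lemma \ref{lem:conjugated_heat_flow} for the boundary–boundary part, and the kernel representation of $e^{-sH}$ for the interior parts, $S$ is the integral operator on $L^2(M,m)$ with symmetric nonnegative kernel
\[
G_r(x,y) = \int_0^\infty e^{-rs} p^N(s,x,y)\, ds, \qquad x,y \in M .
\]
Here $M$ includes $\partial M$, and $p^N$ is continuous up to the boundary. Concretely, $\langle S\phi,\psi\rangle_{\mathcal{K}} = \iint G_r(x,y)\phi(y)\psi(x)\, dm(y)\, dm(x)$.

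\textbf{Step 2: Schur test.} Since $G_r\ge0$ is symmetric, Schur's test gives
\[
\|S\|_{L^2(m)\to L^2(m)} \le \sup_x \int_M G_r(x,y)\, dm(y).
\]
By Fubini/Tonelli this supremum equals $A_r(m)$. Hence for $\phi\in\mathcal{K}$ we get $\|(H+r)^{-1/2}J^*\phi\|_2^2 = \langle S\phi,\phi\rangle \le A_r(m)\|\phi\|_{\mathcal{K}}^2$. Taking adjoints, $\|Jf\|^2_{\mathcal{K}} \le A_r(m)\|(H+r)^{1/2}f\|_2^2$, which is the statement.

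\textbf{Main obstacle.} The delicate point is justifying that $J^*$ and $S$ are well defined with the kernel above, since $|v|\sigma$ is singular with respect to $\mu$. This means checking the identity $\langle R_r J^*\phi, J^*\psi\rangle = \iint G_r\,\phi\psi\, dm\, dm$ for $\phi,\psi$ bounded with compact support.

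For the boundary–boundary term we will use Lemma \ref{lem:conjugated_heat_flow}, integrated in $s$ against $e^{-rs}$, with $\mathrm{tr}^*$ as defined in the paper. The mixed terms follow from the kernel representation and the continuity of $p^N$ up to the boundary. With truncated potentials, $J$ is bounded on $H^1$ because the trace is bounded, so all manipulations are legitimate. Positivity and monotone convergence in the truncation then remove the restriction, and the case $A_r(m)=\infty$ is trivial.
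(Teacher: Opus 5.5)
Your proposal is correct and is essentially the paper's argument: the $TT^*$ reduction $\|T\|^2=\|TT^*\|$ for $T=J(H+r)^{-1/2}$, then the Laplace transform \eqref{eq:Laplace} of the resolvent together with Lemma~\ref{lem:conjugated_heat_flow} to write $TT^*$ as an integral operator with kernel $\int_0^\infty e^{-rs}p^N(s,x,y)\,ds$, a Schur-type bound by $A_r(m)$, and finally truncation of $w,v$ removed by monotone convergence. The only difference is cosmetic: you work in $L^2(M,m)$ and quote Schur's test, whereas the paper works in $L^2(M,g)\times L^2(\partial M,g)$ with multiplication operators by $w_n^{1/2},v_n^{1/2}$ and carries out the same Cauchy--Schwarz estimate by hand on the four interior/boundary terms.
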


\begin{proof}
We can assume that $w,v$ are non-negative. Consider $L^2(M,g)\times L^2(\partial M,g)$ with scalar product $\langle (f_1^\circ,f_1^\partial),(f_2^\circ,f_2^\partial)\rangle:= \int_M f_1^\circ f_2^\circ d\mu + \int_{\partial M} f_1^\partial f_2^\partial d\si$ and induced norm $\|\cdot\|_{2,\mu\times\si}$.
In order to prove the claim it suffices to show that for any $h\in L^2(M,g)$,
\begin{equation}\label{eq:batustep1new}
\left\|\left(\widehat{w^{1/2}}\circ(H+r)^{-1/2}h,\widehat{v^{1/2}}\circ \mathrm{tr} \circ (H+r)^{-1/2}h\right)\right\|_{2,\mu\times\si}^2 \le A_r(m) \|h\|_{2,\mu}^2 
\end{equation}
where $\widehat{w^{1/2}}, \widehat{v^{1/2}}$ are the maximally defined multiplication operators induced by $w^{1/2}, v^{1/2}$, i.e.\ $\mathrm{dom}(\widehat{w^{1/2}}), \mathrm{dom}(\widehat{v^{1/2}})$ consist respectively of those $f\in L^2(M,g)$ which satisfy $w^{1/2}f\in L^2(M,g)$ and those $f\in L^2(\partial M,g)$ which satisfy $v^{1/2}f\in L^2(\partial M,g)$. Indeed, applying~\eqref{eq:batustep1new} to $h=(H+r)^{1/2}f$ with $f\in H^1(M,g)=\mathrm{dom}((H+r)^{1/2})$, we obtain
\begin{equation*}
\left\|\widehat{w^{1/2}}f\right\|_{2,\mu}^2 + \left\|\widehat{v^{1/2}} \mathrm{tr}(f)\right\|_{2,\si}^2 \le A_r(m) \|(H+r)^{1/2} f\|_{2,\mu}^2 = A_r(m) \left( \|H^{1/2} f\|_{2,\mu}^2 + r \|f\|_{2,\mu}^2 \right),
\end{equation*}
which is the desired statement. In order to prove~\eqref{eq:batustep1new}, we set $w_n:=\min(w,n)\in L^\infty (M)$, $ v_n:=\min(v,n)\in L^\infty (\partial M)$ and $m_n:=w_n \mu + v_n\si$ for any $n\in \N$. We consider
\begin{equation*}
T_n:L^2(M,g)\to L^2(M,g)\times L^2(\partial M,g), \ T_n:=\lp \widehat{w_n^{1/2}}\circ (H+r)^{-1/2},\widehat{v_n^{1/2}}\circ  \mathrm{tr}\circ(H+r)^{-1/2} \rp.
\end{equation*}
Since $A_r(m_n)\le A_r(m)$ for any $n$, the desired \eqref{eq:batustep1new} follows from 
\begin{equation}\label{eq:batustep2new}
\left\|T_n \right\|_{(2,\mu),(2,\mu\times\si)}^2 \le A_r(m_n)
\end{equation} 
and monotone convergence. Since $\widehat{w_n^{1/2}},\widehat{v_n^{1/2}}$ and $(H+r)^{-1/2}$ are bounded and self-adjoint and $\mathrm{tr}$ is bounded we obtain by general properties of adjoint operators that
\begin{equation*}
\left\|T_n \right\|_{(2,\mu),(2,\mu\times\si)}^2 = \left\|T_n \circ T_n^*\right\|_{(2,\mu\times\si),(2,\mu\times\si)}.
\end{equation*}
Here $T_n^*: L^2(M,g)\times L^2(\partial M,g) \to L^2(M,g)$ is given by
\begin{align*}
T_n^*(f^\circ,f^\partial)&= \lp\widehat{w_n^{1/2}}\circ(H+r)^{-1/2}\rp^*(f^\circ) + \lp \widehat{v_n^{1/2}} \circ \mathrm{tr}\circ(H+r)^{-1/2}\rp^*(f^\partial)\\
&=(H+r)^{-1/2}\circ\widehat{w_n^{1/2}}(f^\circ) + (H+r)^{-1/2}\circ \mathrm{tr}^* \circ \widehat{v_n^{1/2}}(f^\partial).
\end{align*}
Consider $(f_1^\circ,f_1^\partial),(f_2^\circ, f_2^\partial)\in L^2(M,g)\times L^2(\partial M,g)$. With the Laplace transform
\begin{equation}\label{eq:Laplace}
(H+r)^{-1}= \int_0^\infty e^{-rs}e^{-sH} ds,
\end{equation}
and the Cauchy-Schwarz inequality we obtain
\begin{align*}
    &\phantom{=}\,\,
    \left| \left\langle T_n \circ T_n^* (f_1^\circ,f_1^\partial), (f_2^\circ,f_2^\partial)\right\rangle \right| \\
    &= \left| \int_M w_n^{1/2} \circ (H+r)^{-1/2} \circ T_n^* (f_1^\circ,f_1^\partial) \, f_2^\circ(x) \, d\mu(x) \right. \\
    &\quad \quad  + \left. \int_{\partial M} v_n^{1/2} \circ \mathrm{tr} \circ (H+r)^{-1/2} \circ T_n^* (f_1^\circ,f_1^\partial) \, f_2^\partial(x) \, d\si(x) \right| \\
    &= \Bigl| \int_M w_n^{1/2} \circ (H+r)^{-1} \circ w_n^{1/2} \, f_1^\circ(x) \, f_2^\circ(x) \\
    &\quad \quad + w_n^{1/2} \circ (H+r)^{-1} \circ \mathrm{tr}^* \circ v_n^{1/2} \, f_1^\partial(x) \, f_2^\circ(x) \, d\mu(x) \\
    &\quad \quad + \int_{\partial M} v_n^{1/2} \circ \mathrm{tr} \circ (H+r)^{-1} \circ w_n^{1/2} \, f_1^\circ(x) \, f_2^\partial(x) \\
    &\quad \quad + v_n^{1/2} \circ \mathrm{tr} \circ (H+r)^{-1} \circ \mathrm{tr}^* \circ v_n^{1/2} \, f_1^\partial(x) \, f_2^\partial(x) \, d\si(x) \Bigr| \\
    &\le \int_0^\infty \int_M \int_M w_n^{1/2}(x) |f_1^\circ(y)| \, w_n^{1/2}(y) |f_2^\circ(x)| \, e^{-rs} p^N(s,x,y) \, d\mu(y) \, d\mu(x) \, ds \\
    &\quad \quad + \int_0^\infty \int_M \int_{\partial M} w_n^{1/2}(x) |f_1^\partial(y)| \, v_n^{1/2}(y) |f_2^\circ(x)| \, e^{-rs} p^N(s,x,y) \, d\si(y) \, d\mu(x) \, ds \\
    &\quad \quad + \int_0^\infty \int_{\partial M} \int_M v_n^{1/2}(x) |f_1^\circ(y)| \, w_n^{1/2}(y) |f_2^\partial(x)| \, e^{-rs} p^N(s,x,y) \, d\mu(y) \, d\si(x) \, ds \\
    &\quad \quad + \int_0^\infty \int_{\partial M} \int_{\partial M} v_n^{1/2}(x) |f_1^\partial(y)| \, v_n^{1/2}(y) |f_2^\partial(x)| \, e^{-rs} p^N(s,x,y) \, d\si(y) \, d\si(x) \, ds \\
    &\le \left( \iiint w_n(x) |f_1^\circ(y)|^2 e^{-rs} p^N(s,x,y) \, d\mu(y) \, d\mu(x) \, ds \right)^{1/2} \\
    &\quad \quad \quad \quad \times \left( \iiint w_n(y) |f_2^\circ(x)|^2 e^{-rs} p^N(s,x,y) \, d\mu(y) \, d\mu(x) \, ds \right)^{1/2} \\
    &\quad \quad + \left( \iiint w_n(x) |f_1^\partial(y)|^2 e^{-rs} p^N(s,x,y) \, d\si(y) \, d\mu(x) \, ds \right)^{1/2} \\
    &\quad \quad \quad \quad \times \left( \iiint v_n(y) |f_2^\circ(x)|^2 e^{-rs} p^N(s,x,y) \, d\si(y) \, d\mu(x) \, ds \right)^{1/2} \\
    &\quad \quad + \left( \iiint v_n(x) |f_1^\circ(y)|^2 e^{-rs} p^N(s,x,y) \, d\mu(y) \, d\si(x) \, ds \right)^{1/2} \\
    &\quad \quad \quad \quad \times \left( \iiint w_n(y) |f_2^\partial(x)|^2 e^{-rs} p^N(s,x,y) \, d\mu(y) \, d\si(x) \, ds \right)^{1/2} \\
    &\quad \quad + \left( \iiint v_n(x) |f_1^\partial(y)|^2 e^{-rs} p^N(s,x,y) \, d\si(y) \, d\si(x) \, ds \right)^{1/2} \\
    &\quad \quad \quad \quad \times \left( \iiint v_n(y) |f_2^\partial(x)|^2 e^{-rs} p^N(s,x,y) \, d\si(y) \, d\si(x) \, ds \right)^{1/2} \\
    &\le \left( \int_0^\infty \int_M \int_M e^{-rs} p^N(s,x,y) \, d(w_n\mu + v_n\si)(x) \, d(|f_1^\circ|^2\mu + |f_1^\partial|^2\si)(y) \right)^{1/2} \\
    &\quad \quad \times \left( \int_0^\infty \int_M \int_M e^{-rs} p^N(s,x,y) \, d(w_n\mu + v_n\si)(x) \, d(|f_2^\circ|^2\mu + |f_2^\partial|^2\si)(y) \right)^{1/2} \\
    &\le A_r(m_n) \, \|(f_1^\circ,f_1^\partial)\|_{2,\mu\times\si} \|(f_2^\circ,f_2^\partial)\|_{2,\mu\times\si}.
\end{align*}

Here the third to last inequality follows from applying the Hölder inequality to each of the four triple integrals and the second to last inequality is obtained by Cauchy-Schwarz inequality in $\R^4$.
\end{proof}

\begin{remark}
Note that the above lemma implies the following separate bounds. First, for any $r>0,$ any Borel function $w:M\to\R$ and any $f\in H^1(M,g)$ one has
\begin{equation*}
\|\sqrt{|w|}f\|_2^2 \le C_r(w) \left( \|df\|_2^2 + r\|f\|_2^2 \right)
\end{equation*}
where 
\begin{equation*}
C_r(w):= \sup_{x\in M} \int_0^\infty e^{-rs} \int_M p^N(s,x,y) |w(y)| d\mu(y) ds  \in [0,\infty].
\end{equation*}
Secondly, for any $r>0,$ any Borel function $v:\dM\to\R$ and any $f\in H^1(M,g)$ one has
\begin{equation*}
\|\sqrt{|v|}\, \mathrm{tr}(f)\|_{2,\si}^2 \le D_r(v)\left( \|df\|_{2,\mu}^2 + r\|f\|_{2,\mu}^2 \right)
\end{equation*}
where 
\begin{equation*}
D_r(v):= \sup_{x\in \partial M} \int_0^\infty e^{-rs} \int_{\dM} p^N(s,x,y) |v(y)|d\si(y) ds  \in [0,\infty].
\end{equation*}
Note also that, with identical proof, the previous lemma is true for complex-valued functions $w$ and $v$.
\end{remark}

We are now in a position to establish the following.

\begin{proposition}\label{prop:semigroup}
Let Borel functions $w:M\to\R_+$ and $v:\partial M\to \R_+$ be such that for some $T,\be>0$,
\begin{equation}\label{eq:m}
k_T(m) \le 1-e^{-\be T} \qquad \text{with $m:=w\mu+v\si$.}
\end{equation}
Then the quadratic form 
\begin{equation}\label{eq:a}
a(f,f):=\int_{M} |d f|_g^2 d\mu - \int_M w f^2 d\mu - \int_{\dM} v \mathrm{tr}(f)^2 d\si, \qquad  \mathrm{dom}(a):= H^1(M,g),\end{equation}
generates a self-adjoint operator $A$ with corresponding semigroup $(e^{-tA})_{t\ge0}$ acting on $L^p(M)$ for any $p\in[1,+\infty]$ such that for any $t\ge0$,
\begin{equation*}
\|e^{-tA}\|_{p,p} \le e^{\be(t+T)}.
\end{equation*}
\end{proposition}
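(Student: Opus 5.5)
First, closedness and lower semiboundedness of $a$ via the KLMN theorem. By \eqref{eq:A(alpha)} with $r=\be$ and $t=T$,
\[
A_\be(m) \le \frac{k_T(m)}{1-e^{-\be T}} \le 1 .
\]
Strict inequality is what KLMN needs. If the bound is only $\le 1$, I would pass to $r'>\be$, or shrink $T$ slightly, to get $A_{r'}(m)<1$. Then Lemma \ref{lem:batuadap3} shows that the perturbation
\[
f\mapsto \int_M w f^2\,d\mu+\int_{\dM} v\,\mathrm{tr}(f)^2\,d\si
\]
is form-bounded with respect to the Neumann Dirichlet energy with relative bound $A_{r'}(m)<1$. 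KLMN then gives that $a$ is closed and lower semibounded on $H^1(M,g)$. It therefore generates a self-adjoint operator $A$ and a strongly continuous semigroup $e^{-tA}$ on $L^2$.

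The main work is the $L^p$ bound, which I would prove for $p=\infty$; the case $p=1$ follows by symmetry and duality, and the rest by Riesz--Thorin. Approximate $w,v$ by the truncations $w_n=\min(w,n)$ and $v_n=\min(v,n)$. The bounded perturbed forms converge monotonically, so by monotone convergence of forms $e^{-tA_n}\to e^{-tA}$ strongly. It therefore suffices to prove the bound uniformly in $n$.

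For bounded potentials I would use a Feynman--Kac/Duhamel (Dyson) series. Write the perturbed semigroup as
\[
e^{-tA_n}=\sum_{k\ge 0} S_k(t),
\]
where
\[
S_{k+1}(t)=\int_0^t e^{-(t-s)H}\,\mathcal{V}\,S_k(s)\,ds ,
\]
and $\mathcal{V}$ acts by $f\mapsto w_n f\,\mu+v_n\,\mathrm{tr}(f)\,\si$. Via Lemma \ref{lem:conjugated_heat_flow}, the composite $e^{-(t-s)H}\mathcal{V}$ is an integral operator against the measure $m_n$. On $L^\infty$ the kernel is positive, so
\[
\|S_{k+1}(t)\|_{\infty,\infty}\le \sup_x \int_0^t\int_M p^N(t-s,x,y)\,dm_n(y)\,\sup_{s\le t}\|S_k(s)\|_{\infty,\infty}\,ds .
\]
For $t\le T$ this gives $\|S_k(t)\|_{\infty,\infty}\le k_T(m)^k$. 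Summing the geometric series,
\[
\|e^{-tA_n}\|_{\infty,\infty}\le \frac{1}{1-k_T(m)}\le e^{\be T}\qquad\text{for } t\le T .
\]
This is the Khasminskii lemma. Alternatively, one can phrase it via positivity: $e^{-tA_n}\ge 0$, so the operator norm equals $\sup_x e^{-tA_n}1(x)$, and one applies the Duhamel inequality to the function $1$.

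For general $t$, write $t=kT+s$ with $s\in[0,T)$ and use the semigroup property:
\[
\|e^{-tA_n}\|_{\infty,\infty}\le e^{\be T(k+1)}\le e^{\be(t+T)} .
\]

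The step I expect to be the main obstacle is justifying the Duhamel expansion with the boundary term. This means checking rigorously, in $L^2$ for the bounded truncations, that the series converges and coincides with $e^{-tA_n}$, including the $\mathrm{tr}^*$ part. Concretely, I would verify that it solves the perturbed equation in weak form, using the form $a_n$ and Lemma \ref{lem:conjugated_heat_flow}. A second delicate point is the pointwise $L^\infty$ estimate, which needs positivity of the perturbed semigroup; this follows since $w_n,v_n\ge0$ only add positive terms to the series.
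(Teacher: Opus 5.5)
Your proposal is correct and follows essentially the same route as the paper: form-boundedness from Lemma~\ref{lem:batuadap3} with $r>\beta$, so that $A_r(m)\le (1-e^{-\beta T})/(1-e^{-rT})<1$; truncation to $w_n,v_n$ with monotone convergence of forms; a Khasminskii-type Neumann series giving $\|e^{-tA}\|_{\infty,\infty}\le 1/(1-k_T(m))\le e^{\beta T}$ on $[0,T]$; the semigroup property for larger $t$; and duality plus Riesz--Thorin. The only difference is organizational: the paper constructs the $L^\infty$ extension as a Banach fixed point of the Duhamel map on an exhaustion $\{\Omega_k\}$ and then bounds $e^{-tA}1$ by $I=(\mathrm{Id}-\Psi)^{-1}1=\sum_{l\ge0}\Psi^l 1$, which is the same estimate as your Dyson series bounded term by term.
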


\begin{proof}
\textbf{Step 1.}
We prove the existence of $A$ and its associated semigroup acting on $L^2(M,g)$. To this purpose, we show that the form $a$ is densely defined, closed and semibounded. From Lemma~\ref{lem:batuadap3} we know that for any $f\in H^1(M,g)$ and $r>0$,
\begin{equation*}
\|\sqrt{|w|}\, f\|_{2,\mu}^2 + \|\sqrt{|v|} \, \mathrm{tr}(f)\|_{2,\si}^2 \le A_{r}(m)\left(\|d f\|_{2,\mu}^2 + r \|f\|_{2,\mu}^2 \right).
\end{equation*}
It follows from~\eqref{eq:A(alpha)} and~\eqref{eq:m} that $$ A_r(m)\le \frac{k_T(m)}{1-e^{-r T}} \le \frac{1-e^{-\beta T}}{1-e^{-r T}} \, \cdot$$ Thus 
\begin{equation*}
a(f,f) \ge \int_M |d f|_g^2 d\mu - \frac{1-e^{-\be T}}{1-e^{-rT}}\left( \|d f\|_{2,\mu}^2 +  r\|f\|_{2,\mu}^2\right) = \frac{e^{-\be T}-e^{-rT}}{1-e^{-rT}}\|df\|_{2,\mu}^2 -r\frac{1-e^{-\be T}}{1-e^{-rT}}\|f\|_{2,\mu}^2
\end{equation*}
which shows semiboundedness if we choose $r>\beta$. With the same choice of $r$, we also obtain that the norm
\begin{equation*}
\|f\|_a := \lp a(f,f)+r\|f\|_{2,\mu}^2 \rp^{1/2}
\end{equation*}
is equivalent to the $H^1$ norm since
\begin{align*}
\|f\|_a^2 &= a(f,f)+r\|f\|_{2,\mu}^2 \ge \frac{e^{-\be T}-e^{-rT}}{1-e^{-rT}} \lp \|d f\|_{2,\mu}^2 + r\|f\|_{2,\mu}^2\rp,\\
\|f\|_a^2 &= a(f,f)+r\|f\|_{2,\mu}^2 \le \|d f\|_{2,\mu}^2 + r\|f\|_{2,\mu}^2,
\end{align*}
and this implies closedness. It then follows from \cite[Theorem VIII.15]{MR493419} that there exists a unique self-adjoint semibounded operator $A$ associated with $a$. 
In general, we have
\begin{equation}\label{eq:generator}
\mathrm{dom}(A):=\{f\in H^1(M,g)\ |\ \exists h\in L^2(M,g):\ a(f,g)=\int_M hg d\mu \ \forall g\in H^1(M,g)\}, \ Af = h.
\end{equation}
Notice that for $f$ in the core
\begin{equation*}
\{f\in H^2(M,g)\ |\ (\Delta_g-w)f\in L^2(M,g), \ \partial_\nu^g f = vf \text{ on } \partial M\}
\end{equation*}
we obtain via integration by parts
\begin{equation*}
Af = (\Delta_g - w)f.
\end{equation*}
By the Hille-Yosida theorem there is a corresponding strongly continuous self-adjoint semigroup of bounded operators $(e^{-tA})_{t\ge0}$ acting on $L^2(M,g)$ such that for $f\in L^2(M,g)$
\begin{equation*}
[0,\infty) \ni t \mapsto f(t):=e^{-tA} f
\end{equation*}
is the uniquely determined continuous path with $f(0)=f$ which is differentiable in $(0,\infty)$ and satisfies the abstract heat equation
\begin{equation*}
\partial_t f(t) = -A f(t).
\end{equation*}

\textbf{Step 2.} Assuming that $w$ and $v$ are bounded, we extend $(e^{-tA})_{t \ge 0}$ to a bounded semigroup acting on $L^\infty(M)$. To this aim, consider an exhaustion of $M$ by relatively compact open sets $\{\Omega_k\}$ with smooth boundaries. For any $k$, let $p_{\Om_k}$ be the heat kernel associated with the heat equation with mixed boundary condition \begin{equation}\label{eq:NeumannproblemOmega}
\begin{cases}
\partial_t u = -\Delta_g u & \text{on $\R_+ \times \Omega_k$},\\
\partial_\nu^g u = 0 & \text{on $\R_+ \times (\partial \Omega_k \cap \partial M)$,}\\
u = 0 & \text{on $\R_+ \times (\partial \Omega_k \cap \intM)$.}
\end{cases}
\end{equation}
Define $$\Psi_k:L^\infty([0,T]\times \Omega_k)\to L^\infty([0,T]\times \Omega_k)$$ by 
\begin{align*}
(\Psi_k u) (t,x) &:= \int_0^t \int_{\Omega_k} p_{\Omega_k} (t-s,x,y) w(y) u(s,y) d \mu(y)ds \\
& + \int_0^t \int_{\partial M\cap \partial \Omega_k} p_{\Omega_k}(t-s,x,y) v (y) u(s,y) d\sigma(y) ds.
\end{align*}
For any $t \in (0,T]$,
\begin{align}\label{eq:calculk}
    \|(\Psi_k 1) (t) \|_\infty
    & = \sup_{x \in \Omega_k} \left(  \int_0^t \int_{\Omega_k} p_{\Omega_k}(t-s,x,y) w(y) d\mu(y) ds \right. \\
    & \qquad \qquad \qquad \left. + \int_0^t \int_{\partial M \cap \partial \Omega_k} p_{\Omega_k}(t-s,x,y) v(y) d \sigma(y) ds \right) \nonumber \\
    & \le \sup_{x \in M} \left(  \int_0^t \int_M p^N(t-s,x,y) w(y) d\mu(y) ds \right. \nonumber \\
    & \qquad \qquad \qquad \left. + \int_0^t \int_{\partial M} p^N(t-s,x,y) v(y) d \sigma(y) ds \right) \nonumber \\
    & \le  k_T(m). \nonumber
\end{align}
Therefore, for any $u\in L^\infty([0,T]\times \Omega_k)$,
\begin{align}\label{eq:bound_Psi_k}
\sup_{t\in [0,T]} \| (\Psi_k u) (t) \|_\infty &\le \sup_{t\in[0,T]}\big( \|u(t)\|_\infty  \|(\Psi_k1)(t)\|_\infty \big) \le k_T(m) \sup_{t\in[0,T]} \|u(t)\|_\infty.
\end{align}
For $f\in L^\infty(M)$, define $$\Phi_k:L^\infty([0,T]\times \Omega_k)\to L^\infty([0,T]\times \Omega_k)$$ by
\begin{equation*}
(\Phi_k u) (t) := e^{-t\Delta_{\Omega_k}} \left. f \right|_{\Omega_k} + (\Psi_ku)(t)
\end{equation*}
where $(e^{-t\Delta_{\Omega_k}})_{t\ge0}$ is the heat semigroup generated by $p_{\Omega_k}$. For any $u \in L^\infty([0,T]\times \Omega_k)$,
\begin{align}\label{eq:unifboundPhi_k}
\sup_{t\in[0,T]} \|(\Phi_k u)(t)\|_\infty & \le \sup_{t\in[0,T]} \|e^{-t\Delta_{\Omega_k}} \left. f \right|_{\Omega_k}\|_\infty + \sup_{t\in [0,T]} \| (\Psi_k u) (t) \|_\infty \nonumber \\
&\le \|f\|_\infty + k_T(m) \sup_{t\in[0,T]} \|u(t)\|_\infty
\end{align}
where we use \eqref{eq:bound_Psi_k} and the sub-Markovian property of $(e^{-t\Delta_{\Omega_k}})_{t\ge0}$. Moreover, the map $\Phi_k$ is a contraction, because for any $u_1,u_2 \in L^\infty([0,T]\times \Omega_k)$,
\begin{align*}
\sup_{t\in[0,T]} \|(\Phi_k u_1)(t) - (\Phi_k u_2)(t)\|_\infty & = \sup_{t\in[0,T]} \|(\Psi_k u_1)(t) - (\Psi_k u_2)(t)\|_\infty\\
& \le \left( \sup_{t\in[0,T]}\|u_1(t)-u_2(t)\|_\infty \right) k_T(m) < \sup_{t\in[0,T]}\|u_1(t)-u_2(t)\|_\infty.
\end{align*}
Then the Banach fixed-point theorem ensures that $\Phi_k$ admits exactly one fixed point $\tilde{u}_k \in L^\infty([0,T]\times \Omega_k)$ that fulfils, for a.e.~$t \in [0,T]$,
\begin{align}\label{eq:fixedpointeq}
\tilde{u}_k(t) &= (\Phi_k \tilde{u}_k)(t) \notag \\\Leftrightarrow\ \tilde{u}_k(t) &=  e^{-t \Delta_{\Omega_k}}f + \int_0^t e^{-(t-s)\Delta_{\Omega_k}} (w \tilde{u}_k(s)) ds + \int_0^t e^{-(t-s)\Delta_{\Omega_k}} \mathrm{tr}^*(v \mathrm{tr}(\tilde{u}_k(s))) ds.
\end{align}
Moreover, inequality~\eqref{eq:unifboundPhi_k} implies the uniform bound $$\sup_{t\in[0,T]}\|\tilde{u}_k(t)\|_\infty \le \|f\|_\infty/(1-k_T(m))<\infty.$$
Since each $\Phi_k$ is order preserving, the sequence of functions $(\tilde{u}_k)$ is increasing and converges pointwise to some $\tilde{u}$. Letting $k \to +\infty$ in the previous inequality and in \eqref{eq:fixedpointeq}, we obtain $\tilde{u} \in L^\infty([0,T]\times M)$ with 
$$\sup_{t\in[0,T]}\|\tilde{u}(t)\|_\infty \le \|f\|_\infty/(1-k_T(m))$$
and
\begin{equation}\label{eq:duhamel}
\tilde{u}(t) =  e^{-t H} f + \int_0^t e^{-(t-s)H} (w \tilde{u}(s)) ds + \int_0^t e^{-(t-s)H} \mathrm{tr}^*(v \mathrm{tr}(\tilde{u}(s))) ds.
\end{equation}
Integrating against test functions we find that $\tilde{u}$ is a weak solution of
\begin{equation}\label{eq:robincauchyproblem}
\begin{cases}
\partial_t u = -A u \\
u(0) =f.
\end{cases}
\end{equation}  
Therefore, by setting $e^{-tA}f:=\tilde{u}(t)$, we obtain that $e^{-tA}$ is well-defined from $L^\infty(M)$ to $L^\infty(M)$, as desired.
\\

\textbf{Step 3.} Still assuming that $w$ and $v$ are bounded, we prove that for any $t >0$,
\begin{equation}\label{eq:step3}
    \|e^{-tA}\|_{\infty,\infty} \le e^{\beta(t+T)}.
\end{equation}
To this purpose, we shall use that
\[
\|e^{-tA}\|_{\infty,\infty} = \|e^{-tA}1\|_\infty.
\]
Define $$\Psi:L^\infty([0,T]\times M)\to L^\infty([0,T]\times M)$$ by
\begin{align*}
(\Psi u) (t) &:= \int_0^t e^{-(t-s)H} (w u(s)) ds + \int_0^t e^{-(t-s)H} \mathrm{tr}^*(v \mathrm{tr}(u(s))) ds
\end{align*}
where $(e^{-tH})_{t\ge0}$ is the Neumann heat semigroup acting on $L^\infty(M)$. Acting as in \eqref{eq:calculk} and \eqref{eq:bound_Psi_k}, we obtain that
\begin{align*}
    \sup_{t \in [0,T]}\| (\Psi 1) (t)\|_\infty \le  k_T(m)
\end{align*}
and for any $u\in L^\infty([0,T]\times M)$,
\begin{align*}
\sup_{t\in [0,T]} \| (\Psi u) (t) \|_\infty &\le k_T(m) \sup_{t\in[0,T]} \|u(t)\|_\infty  <\infty.
\end{align*}
The latter inequality implies that \begin{equation}\label{eq:boundpsi}
\|\Psi\|_{(\infty,T),( \infty,T)} \le k_T(m) <1
\end{equation}
where $\|\cdot\|_{(\infty,T),( \infty,T)}$ is the operator norm for bounded operators acting on $L^\infty([0,T]\times M)$. Then $(\mathrm{Id}-\Psi)$ is invertible on $L^\infty([0,T]\times M)$ with inverse $\sum_{l\ge0} \Psi^l$.
Moreover, $\Psi$ is non-negative in the sense that $$\Psi(u) \ge  0 \qquad \text{for $u\ge0$.}$$
Define $$\Phi : L^\infty([0,T]\times M) \to L^\infty([0,T]\times M)$$ as
\[
\Phi = e^{-tH} 1 + \Psi.
\]
Consider $\tilde{u} \in L^\infty([0,T]\times M)$ such that for a.e.~$t \in [0,T]$, $$\tilde{u}(t) = e^{-tA}1.$$  From the previous step it follows that the function $\tilde{u}$ is a fixed point of $\Phi$. Thus for a.e.~$t \in [0,T]$,
\[
\tilde{u}(t) = e^{-t H}1 + \Psi(\tilde{u})(t), \]
and then \[(\mathrm{Id}-\Psi)\tilde{u} (t) = e^{-t H}1.
\]
Define 
\begin{equation}\label{eq:defI}
I:=(\mathrm{Id}-\Psi)^{-1}1 \in L^\infty([0,T]\times M).
\end{equation} 
Using test functions with the equation $I = 1 + \Psi I$, we obtain that I is a solution of \eqref{eq:robincauchyproblem} with $f=1$. 
Note that $\tilde{u} = I$ if the manifold is stochastically complete. In general, it holds that
\begin{equation}\label{eq:utilde}
    \tilde{u} \le I.
\end{equation}
Indeed, if we set $\tilde{v}(t,x) = e^{-tH}1(x)$ for any $(t,x) \in [0,T] \times M$, then the sub-Markovian property of $p^N$ implies $\tilde{v} \le 1$.  The non-negativity of $\Psi$ yields then that
\begin{equation*}
\tilde{u} = (\mathrm{Id}-\Psi)^{-1} \tilde{v} = \sum_{l\ge0} \Psi^{l} \tilde{v} \le \sum_{l\ge0} \Psi^{l} 1 = (\mathrm{Id}-\Psi)^{-1} 1 = I.
\end{equation*}
The bound \eqref{eq:utilde} implies that
\begin{equation}\label{a}
\sup_{t\in[0,T]} \|e^{-tA}1\|_\infty \le \sup_{t\in[0,T]} \|I\|_\infty.
\end{equation} 
But
\begin{equation*}
I = 1 + \Psi(I)
\end{equation*}
so \eqref{eq:boundpsi} yields
\begin{equation*}
\sup_{t\in[0,T]} \|I(t)\|_\infty \le 1 + k_T(m) \sup_{t\in[0,T]} \|I(t)\|_\infty  
\end{equation*}
and then
\begin{equation}\label{eq:upperboundI}
 \sup_{t\in[0,T]} \|I\|_\infty  \le \frac{1}{1-k_T(m)} \le e^{\beta T}.
\end{equation}
Notice also that
\begin{equation*}
I=(\mathrm{Id}-\Psi)^{-1} 1 = 1 + \sum_{l\ge1} \Psi^l 1 \ge 1
\end{equation*}
since $\Psi$ is non-negative, hence
\begin{equation}\label{eq:boundsI}
1\le I \le e^{\beta T}.
\end{equation}

With \eqref{a}, this yields the desired bound \eqref{eq:step3} for $t \in [0,T]$. For $t>T$ consider $k\in\N$ such that $t\in[kT,(k+1)T]$. Then
\begin{equation}\label{eq:conclusionstep3}
\|e^{-tA}1\|_\infty = \|e^{-(kT+(t-kT))A}1\|_\infty \le \|e^{-TA}\|_{\infty, \infty}^k \|e^{-(t-kT)A}1\|_\infty \le e^{\beta(k+1)T} \le e^{\beta(t+T)}.
\end{equation}

\hfill

\textbf{Step 4.} We drop the boundedness assumption on $w$ and $v$, and reach the conclusion. For any $\ell$, set $w_\ell:=\min(w,\ell)$ and $v_\ell:=\min(v,\ell)$, and consider the densely defined, closed and semibounded symmetric form
\begin{equation*}
a_\ell(f,f):= \int_M |d f|_g^2 d\mu - \int_M w_\ell f^2 d\mu - \int_{\partial M} v_\ell \, \mathrm{tr}(f)^2 d\si,\ \quad \mathrm{dom}(a_l):=H^1(M,g).
\end{equation*}
Then we have monotone convergence of the forms
\begin{equation*}
\lim_\ell a_\ell(f,f)=a(f,f).
\end{equation*}
Due to~\cite[Theorem B.18]{MR3751359}, this implies convergence in $L^2(M,g)$
\begin{equation}\label{eq:convL2}
e^{-tA_\ell}f \to e^{-tA}f \text{ as } \ell\to\infty 
\end{equation}
for any $f\in L^2(M,g)$ and $ t\ge0$, where $A_\ell$ and $(e^{-tA_\ell})_{t \ge 0}$ are the operator and the semigroup induced by $a_\ell$. We extend each $e^{-tA_\ell}$ to $L^\infty(M)$ as in Step 2. For any $f\in L^\infty(M)$, this gives a monotone sequence $(e^{-tA_\ell}f)_\ell$ that is bounded in $L^\infty(M)$. We define $e^{-tA}f\in L^\infty(M)$ as its pointwise limit. The bound \eqref{eq:step3} established in the previous step holds for any $\ell$ and is preserved as $\ell \to +\infty$, so that $\|e^{-tA}f\|_{\infty}\le e^{\be(t+T)}\|f\|_\infty$. If $f\in L^2(M,g)\cap L^\infty(M)$ then $(e^{-tA_\ell}f)_\ell$ converges in $L^2(M,g)$ by \eqref{eq:convL2}, hence it converges almost everywhere up to taking a subsequence, and thus the $L^2$-limit has to coincide with the pointwise limit we took above. In this way, we have extended the semigroup $e^{-tA}$ from $L^2(M,g)$ to $L^\infty(M)$. Moreover, from the sequences $(w_\ell),(v_\ell)$, we construct a monotone and bounded sequence of functions $(I_\ell)$ as in~\eqref{eq:defI}, and we consider its pointwise limit $I$. By monotone convergence, $I$ still solves~ \eqref{eq:robincauchyproblem} and still satisfies~ \eqref{eq:boundsI}. Acting as in \eqref{eq:conclusionstep3}, we obtain the desired bound \eqref{eq:step3}. The bound on the operator norm for $p=1$ follows from self-adjointness and the bound for $p\in (1,+\infty)$ is then obtained from the Riesz-Thorin interpolation theorem. 
\end{proof}

As a consequence, we obtain the following existence result.

\begin{corollary}\label{cor:varphi}
Let Borel functions $w:M\to\R_+$ and $v:\partial M\to \R_+$ be such that \eqref{eq:m} holds for some $T,\be>0$. Let $A$ be the self-adjoint operator associated with the corresponding quadratic form $a$ defined in \eqref{eq:a}, as granted by the previous proposition. Then the equation
\begin{equation}\label{eq:robinproblem}
A \varphi + 2\be \varphi = 2\be
\end{equation} 
admits a weak solution $\varphi$ on $M \times \R_+$ such that $1\le\varphi \le 2e^{\be T}$ $\mu$-a.e.
\end{corollary}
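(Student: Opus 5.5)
We build $\varphi$ as a Laplace transform of the semigroup of Proposition~\ref{prop:semigroup} applied to the constant $1$:
\[
\varphi := 2\be \int_0^\infty e^{-2\be t}\, e^{-tA}1 \, dt .
\]
By Proposition~\ref{prop:semigroup}, $\|e^{-tA}1\|_\infty \le e^{\be(t+T)}$, so the integrand is bounded by $e^{\be T}e^{-\be t}$ and the integral converges in $L^\infty(M)$. This already gives the upper bound
\[
\varphi \le 2\be e^{\be T}\int_0^\infty e^{-\be t}dt = 2e^{\be T}.
\]
Formally, $\varphi = 2\be(A+2\be)^{-1}1$, which is exactly \eqref{eq:robinproblem}.

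\textbf{Weak equation.} Since $1\notin L^2(M,g)$ in general, I will test against compactly supported $\psi\in H^1(M,g)$ and work with the truncations $w_\ell, v_\ell$ of Step~4.
\begin{itemize}
\item For bounded potentials, $u(t)=e^{-tA_\ell}1$ is a weak solution of \eqref{eq:robincauchyproblem} with $f=1$ (Steps 2--3). Hence
\[
\frac{d}{dt}\int u(t)\psi\,d\mu = -a_\ell(u(t),\psi),
\]
interpreted via the Duhamel formula \eqref{eq:duhamel}.
\item Multiply by $2\be e^{-2\be t}$ and integrate over $t\in(0,\infty)$. Integrating by parts in time, the boundary term at $t=0$ gives $2\be\int\psi$. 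This yields
\[
a_\ell(\varphi_\ell,\psi)+2\be\int\varphi_\ell\psi\,d\mu = 2\be\int\psi\,d\mu .
\]
\item Pass to the limit $\ell\to\infty$ by monotone convergence of $e^{-tA_\ell}1$, which converges pointwise and monotonically to $e^{-tA}1$ with uniform $L^\infty$ bounds.
\end{itemize}
To make sense of $a(\varphi,\psi)$ one needs $\varphi\in H^1_{loc}$. I would obtain this from a local energy estimate: test the identity with $\eta^2\varphi_\ell$ for a cutoff $\eta$, and use Lemma~\ref{lem:batuadap3} to absorb the potential terms, which is possible because $A_r(m)<1$ for $r>\be$. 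The $L^\infty$ bound then gives uniform local $H^1$ bounds, so weak limits suffice.

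\textbf{Lower bound $\varphi\ge1$.} Since $w,v\ge0$, the operator $\Psi$ is non-negative. Using the Duhamel formula \eqref{eq:duhamel}, or the construction of $I$ with $I\ge1$ in \eqref{eq:boundsI}, one expects $e^{-tA}1\ge e^{-tH}1$. Iterating $\Psi$ should in fact give $e^{-tA}1\ge1$, at least where $e^{-tH}1=1$. The subtlety is stochastic incompleteness: in general only $e^{-tH}1\le1$ holds, which bounds the wrong way.

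I would therefore replace $e^{-tA}1$ by the minimal solution $I$ of \eqref{eq:defI}, extended to all $t$ by the semigroup property as in \eqref{eq:conclusionstep3}. This function satisfies $I(t)\ge1$ and the same growth bound $I(t)\le e^{\be(t+T)}$, and it solves \eqref{eq:robincauchyproblem} with $f=1$. Defining $\varphi$ with $I$ in place of $e^{-tA}1$ gives
\[
\varphi\ge 2\be\int_0^\infty e^{-2\be t}dt = 1,
\]
while keeping the upper bound $\varphi\le 2e^{\be T}$.

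\textbf{Main obstacle.} I expect the hardest step to be justifying rigorously that the Laplace transform of $I$ (or of $e^{-tA}1$) is a weak solution. This requires the time integration by parts with non-$L^2$ data together with the local $H^1$ regularity, uniformly in the truncation parameter $\ell$.
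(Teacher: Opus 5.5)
Your final construction is the paper's: $\varphi=2\be\int_0^\infty e^{-2\be t}I(t)\,dt$ with $I=(\mathrm{Id}-\Psi)^{-1}1$ (by truncation for unbounded $w,v$, and extended beyond $[0,T]$), the bounds $1\le\varphi\le 2e^{\be T}$ read off from $1\le I(t)\le e^{\be(t+T)}$, and the weak equation obtained by integrating $\frac{d}{dt}\int_M I\psi\,d\mu=-a(I,\psi)$ against $2\be e^{-2\be t}$ by parts in time. You were right to drop $e^{-tA}1$, which only satisfies $e^{-tA}1\le I$. Your extra care (compactly supported test functions so that $\int_M\psi\,d\mu$ is finite when $\mu(M)=\infty$, and a local $H^1$ bound via Lemma~\ref{lem:batuadap3} using $A_r(m)<1$ for $r>\be$) fills points the paper passes over. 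One caveat: extending $I$ past $[0,T]$ is not literally ``the semigroup property'', since $I\neq e^{-tA}1$ without stochastic completeness; the paper is equally brief on this point.
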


\begin{proof}
Let $I \in L^\infty([0,T]\times M)$ be defined as in the proof of the previous proposition, i.e.~by \eqref{eq:defI} if $w,v$ are bounded, and by approximation otherwise. Adapting the arguments there, one can extend $I$ to a function in $L^\infty(\R_+\times M)$, that we still denote by $I$, such that for any $t >0$,
\[
1 \le I(t) \le e^{\beta(t+T)}.
\]
For $\mu$-a.e.~$x \in M$, set
\begin{equation*}
\varphi(x):= 2\be \int_0^{+\infty} e^{-2\be t} I(t,x) dt.
\end{equation*}
Then $\mu$-a.e.~on $M$,
\begin{equation*}
1=2\be \int_0^{+\infty} e^{-2\be t} dt \le \varphi \le 2\be e^{\be T}\int_0^{+\infty} e^{-\be t} dt = 2e^{\be T}.
\end{equation*}
Moreover, for any $h\in H^1(M,g)$,
\begin{align*}
 a(\varphi,h) + 2 \beta \int_M \varphi h d\mu
&= 2 \beta \int_0^{+\infty} e^{-2 \beta t} \lp a(I,h) + \int_M 2 \beta I f d\mu  \rp dt\\
&= 2 \beta \int_0^{+\infty} - \frac{d}{dt} \lp e^{-2 \beta t} \int_M I h d\mu\rp dt = 2 \beta  \int_M h d\mu 
\end{align*}
where we have used that $I$ is a weak solution of $\partial_tu  = -Au$, i.e.
\begin{equation*}
\frac{d}{dt} \int_M I h d\mu = - a(I,h).
\end{equation*}
Hence $\varphi$ is indeed a weak solution of~\eqref{eq:robinproblem}.
\end{proof}

\begin{remark}
The homogeneous version of~\eqref{eq:robinproblem} is studied in~\cite{MR1076956} via probabilistic methods.
\end{remark}

Adding mild regularity to the functions $w$ and $v$ yields the following result.

\begin{proposition}\label{prop:varphi}
    For $\alpha \in (0,1]$, let $w\in \mathcal{C}^{0,\alpha}_{\text{loc}}(M^\circ)$ and $v \in L^\infty_{\text{loc}}(\partial M)$ be non-negative functions such that \eqref{eq:m} holds for some $T,\be>0$. Then the problem
    \begin{equation}\label{eq:Robinproblembeta}
\begin{cases}
(\Delta_g - w) \varphi + 2\beta \varphi =  2 \beta&  \text{on $M$},\\
\partial_\nu^g \varphi = v \varphi & \text{on $\partial M $,}
\end{cases}
\end{equation}
admits a solution $\varphi \in \mathcal{C}^{1,\alpha}_{\text{loc}}(\overline{M}) \cap \mathcal{C}^{2,\alpha}_{\text{loc}}(M^\circ)$ such that $1 \le \varphi \le 2 e^{\beta T}$.

\end{proposition}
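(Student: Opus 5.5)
The natural plan is to take the weak solution $\varphi$ given by Corollary~\ref{cor:varphi} and upgrade its regularity by local elliptic theory. The hypothesis \eqref{eq:m} is exactly the one required there, so we obtain $\varphi\in H^1(M,g)$ (or at least $H^1_{\mathrm{loc}}$, via the construction from $I$) with $1\le\varphi\le 2e^{\beta T}$ $\mu$-a.e. It satisfies
\[
\int_M \langle d\varphi, dh\rangle_g\, d\mu - \int_M w\varphi h\, d\mu - \int_{\partial M} v\,\mathrm{tr}(\varphi)\,\mathrm{tr}(h)\, d\sigma + 2\beta\int_M \varphi h\, d\mu = 2\beta \int_M h\, d\mu
\]
for all $h\in H^1(M,g)$. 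Testing with $h$ compactly supported in $\intM$ gives $\Delta_g\varphi = (w-2\beta)\varphi + 2\beta$ in the distributional sense in the interior. Testing with general $h$ and integrating by parts formally recovers the Robin condition $\partial^g_\nu\varphi = v\varphi$ in the weak (conormal) sense on $\partial M$. The bounds $1\le\varphi\le 2e^{\beta T}$ pass directly to the continuous representative once regularity is established.

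For interior regularity I would bootstrap. The right-hand side $F:=(w-2\beta)\varphi+2\beta$ lies in $L^\infty_{\mathrm{loc}}(\intM)$ because $w$ is locally Hölder, hence locally bounded, and $\varphi$ is bounded. By $L^p$ elliptic estimates, $\varphi\in W^{2,p}_{\mathrm{loc}}(\intM)$ for every $p<\infty$, hence $\varphi\in\mathcal{C}^{1,\gamma}_{\mathrm{loc}}(\intM)$ for all $\gamma<1$. Then $F\in\mathcal{C}^{0,\alpha}_{\mathrm{loc}}(\intM)$, since it is a product of $\mathcal{C}^{0,\alpha}$ and $\mathcal{C}^{1}$ functions, and interior Schauder estimates give $\varphi\in\mathcal{C}^{2,\alpha}_{\mathrm{loc}}(\intM)$. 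At that point the equation holds classically in the interior.

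For regularity up to the boundary, work in a boundary chart near a point of $\partial M$, flattening the boundary. There $\varphi$ is a bounded weak solution of a divergence-form equation $\mathrm{div}(a\nabla\varphi)=F$ with smooth coefficients and conormal boundary datum $\psi:=v\varphi\in L^\infty$. The key step, and in my view the main obstacle, is that $v$ is only $L^\infty_{\mathrm{loc}}$, so one cannot expect more than $\mathcal{C}^{1,\alpha}$ up to the boundary. Even reaching that requires two things: $F$ must be controlled near the boundary, and $\alpha$-Hölder regularity of the conormal derivative must be available under merely bounded Neumann data. Here I would invoke the Lieberman-type $\mathcal{C}^{1,\alpha}$ estimate for oblique/conormal problems with bounded boundary data. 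First, De Giorgi--Nash--Moser for the Neumann problem gives $\varphi\in\mathcal{C}^{0,\gamma}$ up to the boundary. Then, with $F$ bounded, one uses $W^{1,p}$ estimates for the Neumann problem with $L^p$ right-hand sides, which give $\varphi\in W^{1,p}$ near the boundary for every $p$. Viewing the Neumann datum $\psi\in L^\infty$ through the trace space as a $W^{1-1/p,p}$-type datum, the $\mathcal{C}^{1,\alpha}$ estimate then follows for $\alpha<1$ via Morrey-Campanato arguments.

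A subtle point is that $w$ is only assumed locally Hölder in $\intM$, not up to $\partial M$. To keep $F$ bounded near the boundary, I would interpret the hypothesis as local boundedness of $w$ on compact subsets of $M$. If it is genuinely only interior, the fallback is to control $F$ near the boundary in some $L^p$ with $p>n$, using that $w\mu$ is a Kato/Dynkin-type measure. Finally, patching the local charts gives $\varphi\in\mathcal{C}^{1,\alpha}_{\mathrm{loc}}(\overline M)\cap\mathcal{C}^{2,\alpha}_{\mathrm{loc}}(\intM)$. The Robin condition $\partial_\nu^g\varphi=v\varphi$ then holds $\sigma$-a.e. by comparing the weak formulation with the divergence theorem applied to the now-$\mathcal{C}^1$ function $\varphi$.
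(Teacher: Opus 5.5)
Your route is the paper's route: take the weak solution from Corollary~\ref{cor:varphi}, bootstrap in the interior with $L^p$/Schauder estimates, and apply a Lieberman-type conormal $\mathcal{C}^{1,\alpha}$ estimate in boundary charts. The interior part is fine for $\alpha<1$.

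\textbf{The gap.} It sits exactly where you place the ``main obstacle'': with $v$ only in $L^\infty_{\mathrm{loc}}(\partial M)$ you cannot reach $\mathcal{C}^{1,\alpha}$, or even $\mathcal{C}^1$, up to the boundary.
\begin{itemize}
\item Suppose $\varphi\in\mathcal{C}^{1,\alpha}_{\mathrm{loc}}(\overline M)$ and $\varphi\ge 1$. Your own final step (weak formulation plus Green's formula) gives $v=\partial_\nu^g\varphi/\varphi$ $\sigma$-a.e. So $v$ must agree a.e.\ with a locally $\alpha$-H\"older function.
\item Take $w=0$ and $v=\varepsilon\mathbf{1}_E$, with $E$ and its complement both of positive measure in one boundary component. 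For $\varepsilon$ small, \eqref{eq:m} holds by \eqref{eq:subMarkov_boundary}, yet no solution with the claimed regularity exists.
\item The sentence that fails is ``viewing $\psi\in L^\infty$ through the trace space as a $W^{1-1/p,p}$-type datum''. A jump function belongs to $W^{1-1/p,p}$ only for $p<2$, whereas you need $p>n\ge 2$. Bounded conormal data give only $W^{1,p}$ regularity, hence only $\mathcal{C}^{0,\gamma}$.
\item A model case is $u=\mathrm{Re}\,\frac{1}{\pi}(z\log z-z)$ on the upper half-plane. It is harmonic with bounded Neumann data $\partial_y u(x,0)=-\mathbf{1}_{\{x<0\}}$, yet $\partial_x u=\frac1\pi\ln|z|$ is unbounded.
\end{itemize}

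\textbf{Relation to the paper, and the repair.} The paper's proof has the same gap. It applies Theorem 5.54 of \cite{MR3059278} with $\beta^0=\tilde\chi\sqrt{|g|}\,\tilde v$ and asserts that H\"older continuity of $\beta^0$ is not needed, which the obstruction above excludes. So the statement itself needs $v\in\mathcal{C}^{0,\alpha}_{\mathrm{loc}}(\partial M)$. This does hold where the proposition is used, since $v=\la\,\mathrm{I\!I}_-$ is locally Lipschitz.

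Under that hypothesis your chain closes:
\begin{itemize}
\item De Giorgi--Nash--Moser for the Robin problem gives $\varphi\in\mathcal{C}^{0,\gamma}$ up to $\partial M$, so $v\varphi$ is H\"older.
\item The conormal Campanato/Schauder estimate then gives $\mathcal{C}^{1,\min(\alpha,\gamma)}$.
\item One more pass, now with $\varphi$ Lipschitz, gives $\mathcal{C}^{1,\alpha}$.
\end{itemize}
The endpoint $\alpha=1$ must be excluded in any case, in the interior too, because Schauder theory does not yield $\mathcal{C}^{2,1}$.

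\textbf{On $w$.} Your primary reading, local boundedness on compact subsets of $M$, is the right one. It holds for $w=\la\,\mathrm{Ric}_-$, and it is what the paper tacitly uses when it puts $\tilde w$ into $c^0$. Your fallback does not work: a Dynkin bound gives no $L^p$ control with $p>n$ near $\partial M$. For instance, on a compact $M$, $w=d(\cdot,\partial M)^{-1/2}$ has $k_T(w\mu)\lesssim T^{3/4}$ but is not in $L^2$ near $\partial M$.
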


\begin{proof}\label{rem:regularity} It follows from classical elliptic regularity theory that the weak solution $\varphi$ provided by the previous corollary belongs to $ \mathcal{C}^{2,\alpha}_{\text{loc}}(M^\circ)$ so long as $w\in \mathcal{C}^{0,\alpha}_{\text{loc}}(M^\circ)$, see e.g.~\cite[Theorem 9.19]{MR1814364}. For the regularity up to the boundary we use~\cite[Thorem 5.54]{MR3059278}. Indeed, with a local boundary chart $(U,\phi)$ around $x \in \partial M$ and a cut-off function $\chi \in \mathcal{C}_c^\infty(U)$ such that $\chi = 1$ in a neighborhood of $x$ in $U$, we may rewrite \eqref{eq:Robinproblembeta} in coordinates as
\begin{align*}
D_i(a^{ij}D_j\tilde{\varphi} + b^i \tilde{\varphi}) + c^i D_i \tilde{\varphi} + c^0 \tilde{\varphi} &= D_i f^i + \Theta \quad \text{ in } V^\circ \\ 
a^{ij} \gamma_i D_j \tilde{\varphi} + [b\cdot \gamma] \,  \tilde{\varphi} + \beta^0 \tilde{\varphi} &= \psi + f\cdot \gamma \quad \text{ on } \partial V
\end{align*}
where $V = \phi (U)$ and
\begin{gather*}
a^{ij}=\tilde{\chi} \, g^{ij}\sqrt{|g|}, \quad b^i =0,  \quad c^i =0,  \quad c^0 = \tilde{\chi}\,  (2\beta - \tilde{w})\sqrt{|g|},\quad  f^i =0, \quad \Theta=2\beta \tilde{\chi} \sqrt{|g|}\\
b=0, \quad \beta^0 = \tilde{\chi} \sqrt{|g|} \tilde{v}, \quad \psi=0, \quad f=0.
\end{gather*}
Here $\gamma$ is the unit inner normal at the boundary and tilde means composition with $\varphi^{-1}$. All regularity assumptions for \cite[Theorem 5.54]{MR3059278} are fulfilled since $g$ is smooth. Hence we obtain $\varphi\in C^{1,\alpha}_{loc}(\overline{M})$ for any $\alpha\in(0,1)$.
\end{proof}

\begin{remark}
Under the assumptions of the previous proposition, the generator $A$ from~\eqref{eq:generator} satisfies
\begin{align}\label{eq:explicitgenerator}
\mathrm{dom}(A)&=\{f\in \mathcal{C}^{1,\alpha}_{\text{loc}}(\overline{M})\cap \mathcal{C}^{2,\alpha}_{\text{loc}}(M^\circ):\ \partial_\nu^g f = v f \text{ on }\partial M\}\\
Af &= (\Delta_g-w)f\, . \notag
\end{align}
\end{remark}

\begin{remark}
Note that Hölder continuity of $c^0$ and $\beta^0$ is not assumed in \cite[Thorem 5.54]{MR3059278}. As a consequence, the solution $\varphi$ from the previous proposition belongs to $C^{1,\alpha}_{loc}(\overline{M})$ for any $\alpha\in(0,1)$ even if $w$ belongs only to $L^\infty_{\text{loc}}(M)$.
\end{remark}

\section{Bakry-\'Emery via time change}

In this section, we establish our main technical result, namely Theorem \ref{thm:timechange}. It asserts that \eqref{eq:NeumannDynkin} implies bi-Lipschitz equivalence to a weighted Riemannian manifold satisfing a Bakry-\'Emery curvature-dimension condition.

\subsection{Bakry-\'Emery condition.} The celebrated Bakry-\'Emery curvature-dimension condition was initially introduced in the setting of Dirichlet spaces by Bakry and \'Emery in \cite{BakryEmery}, see also the work of Bakry \cite{Bakry} and the monograph by Bakry, Gentil, and Ledoux, \cite{MR3155209}. Its consequences on weighted Riemannian manifolds without boundary were explored by various authors, including Qian \cite{Qian}, Lott \cite{Lott}, Bakry and Qian \cite{BakryQian} and Wei and Wylie \cite{WeiWylie}. In this paper, we consider the condition on a weighted Riemannian manifold $(M^n,g,e^{-\psi}\mu)$ with boundary. Here $\psi$ is a $\mathcal{C}^2$ function on $M^\circ$. The triple $(M^n,g,e^{-\psi}\mu)$ admits an associated weighted Neumann Laplacian $H_\psi$ defined by 
\[
\mathrm{dom}(H_\psi) = \{ u \in H^{2}(M,g,e^{-\psi}\mu) \, : \, \partial_\nu^gu=0 \,  \text{ $\sigma$-a.e.~on $\partial M$} \}, \qquad  H_\psi u = \Delta_g u + \langle d\psi,du \rangle_g.
\]
In this context, the Bakry-\'Emery curvature-dimension condition may be written as follows, as explained in the note of Han \cite{HanUnpub}.

\begin{definition}\label{def:weightedbakryemery}
For $K\in\R$ and $N\in[1,+\infty]$ we say that a weighted Riemannian manifold $(M^n,g,e^{-\psi}\mu)$ with boundary satisfies the Bakry-\'{E}mery condition BE$(K,N)$ if

\begin{enumerate}
    \item the boundary $\partial M$ is convex, that is to say, for any smooth vector field $X\in T\partial M$,
\begin{equation*}
\mathrm{I\!I}^g(X,X) \ge 0,
\end{equation*}
\item for any $u\in \mathcal{C}_c^\infty(M^\circ)$ the following inequality holds pointwise in $M^\circ$:
\begin{equation}\label{eq:BE}
\langle d H_\psi u , du\rangle_g - \frac{1}{2} H_\psi (|du|_g^2) \ge \frac{(H_\psi u)^2}{N} + K |du|_g^2.
\end{equation}
\end{enumerate}  
\end{definition}

\begin{remark}
    It is worth stressing out that the Bakry--\'Emery condition for weighted Riemannian manifolds with boundary forces the boundary to be convex.
\end{remark}

\subsection{Time changes.} The theory of time changes in the setting of Dirichlet spaces is described in the monograph of Chen and Fukushima \cite{CF}, for instance. Here, we focus on the case of Riemannian manifolds with boundary. In this setting, a time change is a particular weighted Riemannian structure.

\begin{definition}\label{def:timechange}
Let $(M^n,g)$ be a Riemannian manifold with boundary, and $h \in \mathcal{C}^2(M^\circ)$. Then the time change of $(M,g)$ induced by $h$ is the weighted Riemannian manifold $(M,e^{2h}g,e^{2h}\mu)$.
\end{definition}

Such a time change may be understood as the composition of a conformal change of the Riemannian metric, passing from $g$ to $e^{2h}g$, followed by the weighting of the resulting volume measure $e^{nh}\mu$ by the factor $e^{(2-n)h}$. In other words,
\begin{equation}\label{eq:weighted}
(M,e^{2h}g,e^{2h}\mu) = (M,\overline{g},e^{-\psi}\mu')
\end{equation}
where $\overline{g} := e^{2h}g$, $\mu'$ is the volume measure of $\overline{g}$, and $\psi:=(n-2)h$.

To illustrate the effect of a time change, we may consider reflected Brownian motion\footnote{again, strictly speaking, sped up by a factor 2} $(B_t)_{t\ge0}$ on $M$ and its corresponding Dirichlet form in $L^2(M,g)$ defined by
\begin{equation*}
\mathrm{dom}(\eE):= H^1(M,g), \qquad \eE(f,h):=\int_M g( d f, d h) d\mu.
\end{equation*}
The time change $\overline{g}=e^{2h}g,\overline{\mu}=e^{2h}\mu$ leads to the Dirichlet form on $L^2(M,\bar{g})$ defined by
\begin{equation*}
\mathrm{dom}(\overline{\eE}):= H^1(M,g), \qquad \overline{\eE}(f,h):=\int_M g( d f, d h) d\mu.
\end{equation*}
where the only change is in the measure for the $L^2$ space. 
The stochastic process associated to the new Dirichlet form $\overline{\eE}$ is
$(B_{\tau(t)})_t$ where $\tau(t):=\inf \{s>0\ |\ A_s>t\}$ is a time reparametrization defined through the positive continuous additive functional associated with $\overline{\mu}$ via Revuz correspondence
\begin{equation*}
A_t:= \int_0^t e^{2h(B_s)} ds.
\end{equation*}
See~\cite[section 6.2]{MR2778606} for more details.
Thus the time change actually corresponds to running reflected Brownian motion on a different clock, namely in regions where $h$ is large, $A_t$ increases rapidly and accordingly $\tau$ increases slowly, i.e. the process is slowed down.

The following lemma may be proved verbatim as the boundaryless version from~\cite{MR4925251}, see also \cite{MR4386845,SturmJFA2018,MR4182834} for a similar statement in a non-smooth setting.
\begin{lemma}\label{lem:traforule}
Let $(M^n,g)$ be a smooth Riemannian manifold with boundary and $h\in \mathcal{C}^2(M^\circ)$. Set $\overline{g}:=e^{2h}g, \overline{\mu}:=e^{2h}\mu_g$ and $L:=e^{-2h}\Delta_g$. Then for any $q\in(0,+\infty]$ and $u\in \mathcal{C}_c^\infty(M^\circ)$ the inequality
\begin{equation*}
\langle d Lu, du\rangle_{\overline{g}} - \frac{1}{2}L|du|_{\overline{g}}^2 \ge \frac{(Lu)^2}{n+q} + (-\mathrm{Ric}_- + \Delta_g h - c(n,q)|dh|_g^2) e^{-2h}|du|_{\overline{g}}^2
\end{equation*}
holds pointwise in $M^\circ$ where
$c(n,q)=(n-2)(n+q-2)/q$.
\end{lemma}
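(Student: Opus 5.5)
Since $u\in\mathcal{C}_c^\infty(M^\circ)$ and the inequality is pointwise in $M^\circ$, the boundary plays no role: the statement is purely local and interior. The plan is therefore to reduce to the standard computation for conformal changes combined with the weighted Bochner formula, as in the boundaryless case. By \eqref{eq:weighted}, write the time change as $(M,\overline{g},e^{-\psi}\mu')$ with $\overline{g}=e^{2h}g$ and $\psi=(n-2)h$. First I will check that $L=e^{-2h}\Delta_g$ equals the weighted Laplacian $\overline{\Delta}+\langle d\psi,\cdot\rangle_{\overline g}$. The conformal Laplacian formula gives $\Delta_{\overline g}u=e^{-2h}(\Delta_g u-(n-2)\langle dh,du\rangle_g)$, and adding $e^{-2h}(n-2)\langle dh,du\rangle_g$ cancels the drift, so this is immediate.

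Next I will apply the weighted Bochner identity on $(M^\circ,\overline g)$:
\[
\langle dLu,du\rangle_{\overline g}-\tfrac12 L|du|^2_{\overline g}=|\nabla^{\overline g}du|^2_{\overline g}+\big(\mathrm{Ric}_{\overline g}+\nabla^{\overline g}d\psi\big)(\nabla u,\nabla u).
\]
To handle the Hessian term, I will use the standard trick for producing the dimension $n+q$. Writing $L u=\mathrm{tr}\,\nabla^{\overline g}du+\langle d\psi,du\rangle$, Cauchy--Schwarz gives $|\nabla^{\overline g} du|^2\ge(\mathrm{tr})^2/n$, and the elementary inequality $\frac{a^2}{n}+\frac{b^2}{q}\ge\frac{(a+b)^2}{n+q}$ then yields
\[
|\nabla^{\overline g}du|^2\ge\frac{(Lu)^2}{n+q}-\frac{\langle d\psi,du\rangle_{\overline g}^2}{q}.
\]
This produces the Bakry--\'Emery $N$-Ricci tensor $\mathrm{Ric}_{\overline g}+\nabla^{\overline g}d\psi-\frac{d\psi\otimes d\psi}{q}$.

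The main computational step is to express this tensor in terms of $g$ and $h$. The conformal transformation law gives
\[
\mathrm{Ric}_{\overline g}=\mathrm{Ric}_g-(n-2)(\nabla^g dh-dh\otimes dh)+\big(\Delta_g h-(n-2)|dh|_g^2\big)g,
\]
with $\Delta_g$ non-negative here, so the sign of the $g$-term must be tracked carefully. The Hessian changes as $\nabla^{\overline g}dh=\nabla^g dh-2dh\otimes dh+|dh|_g^2 g$. Substituting $\psi=(n-2)h$, the Hessian terms of $h$ cancel exactly, leaving
\[
\mathrm{Ric}_g+(\Delta_g h)g-(n-2)dh\otimes dh-(n-2)|dh|^2_g g\cdot 0+\dots-\frac{(n-2)^2}{q}dh\otimes dh,
\]
up to $|dh|^2g$ terms that cancel. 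Evaluated on $\nabla^{\overline g}u=e^{-2h}\nabla^g u$, where $|du|^2_{\overline g}=e^{-2h}|du|_g^2$, the remaining $dh\otimes dh$ terms are bounded using $(dh(\nabla u))^2\le|dh|_g^2|du|_g^2$. Their total coefficient is $(n-2)+(n-2)^2/q=(n-2)(n+q-2)/q=c(n,q)$.

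Finally, using $\mathrm{Ric}_g\ge-\mathrm{Ric}_-\,g$ and collecting the powers of $e^{-2h}$ (one from converting $g$-norms to $\overline g$-norms, and one from $\overline g$-vectors) gives the factor $e^{-2h}|du|^2_{\overline g}$ and the claimed inequality. The main obstacle is the bookkeeping in the conformal formulas: the sign convention for $\Delta_g$, and the exact cancellation of the Hessian and $|dh|^2 g$ terms, which is what the choice $\psi=(n-2)h$ is designed to achieve. I expect to verify this by checking the coefficients against the boundaryless reference \cite{MR4925251}. The case $q=+\infty$ is handled by dropping the $1/q$ term, and $c(n,\infty)=n-2$.
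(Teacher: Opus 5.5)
Your proposal is correct and takes essentially the paper's route: the paper only remarks that the statement is interior and local and defers to the boundaryless computation in \cite{MR4925251}, which is exactly the weighted Bochner formula for $(\overline g, e^{-(n-2)h}\mu')$ combined with the conformal transformation laws, leading to the tensor $\mathrm{Ric}_g+(\Delta_g h)g-c(n,q)\,dh\otimes dh$ that you obtain. Two cosmetic fixes: with the paper's non-negative convention one has $Lu=-\mathrm{tr}\,\nabla^{\overline g}du+\langle d\psi,du\rangle_{\overline g}$, which is harmless since you may apply $\frac{a^2}{n}+\frac{b^2}{q}\ge\frac{(a+b)^2}{n+q}$ with $a=-\mathrm{tr}\,\nabla^{\overline g}du$; and your garbled intermediate display should simply read $\mathrm{Ric}_g+(\Delta_g h)g-(n-2)\,dh\otimes dh-\frac{(n-2)^2}{q}\,dh\otimes dh$.
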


As $L$ in the previous lemma is actually the weighted Laplacian on the weighted manifold $(M,e^{2h}g,e^{(2-n)h}e^{nh}\mu)$, this statement will be useful in order to establish a Bakry-\'{E}mery condition as in Definition~\ref{def:timechange}.

The next lemma describes how the second fundamental form of the boundary changes under a conformal change of metric. Its proof is a simple calculation that we omit here for brevity.

\begin{lemma}\label{lem:2ndff}
Let $(M^n,g)$ be a smooth Riemannian manifold with boundary and $h\in \mathcal{C}^1(\overline{M})$. Set $\overline{g}:=e^{2h}g$. Then for each $X\in T\partial M$ 
\begin{equation*}
\mathrm{I\!I}^{\overline{g}}(X,X)=e^h\lp \mathrm{I\!I}^g(X,X) +  |X|^2_g \, \partial_{\nu}^g h\rp.
\end{equation*}
\end{lemma}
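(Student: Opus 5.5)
The formula is local and pointwise on $\partial M$, so I would prove it directly from the definition $\mathrm{I\!I}^{\overline g}(X,Y)=\langle \nabla^{\overline g}_X \nu_{\overline g}, Y\rangle_{\overline g}$. The plan is to express the three ingredients, namely the new unit normal, the new connection and the new metric, in terms of the old ones and $h$, and then simplify.

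First, since $\overline g = e^{2h}g$, the $\overline g$-orthogonal complement of $T\partial M$ coincides with the $g$-orthogonal complement. Hence the outward unit normal is $\nu_{\overline g}=e^{-h}\nu_g$.

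Second, I would use the classical conformal change formula for the Levi-Civita connection:
\[
\nabla^{\overline g}_X Y=\nabla^g_X Y + X(h)\,Y + Y(h)\,X - g(X,Y)\,\nabla^g h .
\]
This holds for vector fields on $M$. It only needs $h\in\mathcal C^1$ near $\partial M$, since the Christoffel symbols involve first derivatives of $h$, and this is why the lemma assumes $h\in\mathcal{C}^1(\overline M)$. To evaluate $\nabla^{\overline g}_X\nu_{\overline g}$ with $X$ tangent to the boundary, I would extend $\nu_g$ smoothly near $\partial M$. This is harmless because the covariant derivative along $X\in T\partial M$ only depends on values of the field along $\partial M$.

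Third, I would compute for $X\in T\partial M$:
\[
\nabla^{\overline g}_X(e^{-h}\nu_g)=e^{-h}\Big(-X(h)\nu_g+\nabla^g_X\nu_g + X(h)\nu_g + \nu_g(h)X - g(X,\nu_g)\nabla^g h\Big).
\]
The two terms $\pm X(h)\nu_g$ cancel, and $g(X,\nu_g)=0$. This leaves $e^{-h}\big(\nabla^g_X\nu_g+(\partial^g_\nu h)X\big)$. Pairing with $X$ in $\overline g=e^{2h}g$ gives
\[
\mathrm{I\!I}^{\overline g}(X,X)=e^{2h}e^{-h}\big(\mathrm{I\!I}^g(X,X)+|X|_g^2\,\partial_\nu^g h\big),
\]
which is the claimed identity.

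The only step requiring some care is the bookkeeping: keeping track of which norm normalizes $\nu$, and checking that the derivative of the factor $e^{-h}$ is cancelled by the $X(h)\nu_g$ term of the connection formula. This bookkeeping is where I expect the main obstacle to lie. Everything else is routine.
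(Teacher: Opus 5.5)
Your proof is correct and is exactly the standard computation that the paper omits as ``a simple calculation''. You use $\nu_{\overline g}=e^{-h}\nu_g$ and the conformal change formula for $\nabla^{\overline g}$; the $X(h)\nu_g$ terms cancel, $g(X,\nu_g)=0$ removes the gradient term, and pairing with $X$ in $\overline g=e^{2h}g$ produces the factor $e^{h}$.
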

It is worth pointing out that replacing $\mu$ with $\overline{\mu}$ makes no difference here, i.e.\ the second fundamental form of the boundary is affected only by the conformal change and not by the time change.

\subsection{Time change under Neumann--Dynkin condition.} We prove the following.

\begin{theorem}\label{thm:timechange}
Let $(M^n,g)$ be a smooth metrically complete Riemannian manifold with a non-empty boundary. Assume that \eqref{eq:NeumannDynkin} holds for some  $T>0$ and $\gamma \in [0,1/(n-2))$. Then there exist $h\in \mathcal{C}^1(\overline{M}) \cap \mathcal{C}^2(M^\circ)$, and constants $K\ge0$, $N> n$, $C>0$ depending on $n$ and $\gamma$ only, such that $0\le h \le C$ and$$\text{$(M,e^{2h}g,e^{2h}\mu)$ satisfies the $\mathrm{BE}(-K/T,N)$ condition.}$$ Moreover, if for some $T>0$,
\begin{equation}\label{eq:conditionD'}
k_T(\meas) < \frac{1}{3(n-2)} \, , \tag{D'}
\end{equation}
then we can choose $K=4k_T(\meas),\ N= n + 4(n-2)^2k_T(\meas)$ and $C=4k_T(\meas)$.
\end{theorem}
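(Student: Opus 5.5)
The plan follows the boundaryless strategy: take $h := \frac{1}{\delta}\log\varphi$, where $\varphi$ solves the Robin Schrödinger problem of Proposition~\ref{prop:varphi}. The potentials are rescaled multiples $w = \delta\,\mathrm{Ric}_-$ and $v = \delta\,\mathrm{I\!I}_-$, with $\delta>0$ to be chosen. For the measure $m = w\mu + v\sigma = \delta\meas$ we have $k_T(m) = \delta k_T(\meas) \le \delta\gamma$. We fix $\beta$ by $1-e^{-\beta T} = \delta\gamma$, i.e. $\beta T = -\log(1-\delta\gamma)$, so that \eqref{eq:m} holds. Proposition~\ref{prop:varphi} then yields $\varphi \in \mathcal{C}^{1,\alpha}_{\mathrm{loc}}(\overline M)\cap\mathcal{C}^{2,\alpha}_{\mathrm{loc}}(M^\circ)$ with $1\le\varphi\le 2e^{\beta T}$. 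Consequently $0\le h\le \frac{1}{\delta}\big(\log 2 + \beta T\big)$, which depends only on $n,\gamma$ once $\delta$ is fixed. (Regularity: $\mathrm{Ric}_-$ is only locally Lipschitz, hence Hölder, so the interior $\mathcal{C}^{2,\alpha}$ statement applies; $\mathrm{I\!I}_-$ is locally bounded.)

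\textbf{Boundary convexity.} On $\partial M$ we have $\partial_\nu^g h = \frac{\partial_\nu^g\varphi}{\delta\varphi} = \frac{v}{\delta} = \mathrm{I\!I}_-$. Lemma~\ref{lem:2ndff} therefore gives $\mathrm{I\!I}^{\overline g}(X,X) = e^h(\mathrm{I\!I}^g(X,X)+\mathrm{I\!I}_-|X|_g^2)\ge 0$. This is exactly why the Robin datum was chosen proportional to $\varphi$. The time-changed manifold is the weighted manifold \eqref{eq:weighted}, whose weighted Laplacian is $L = e^{-2h}\Delta_g$ with weight $\psi=(n-2)h$, so condition (1) of Definition~\ref{def:weightedbakryemery} holds.

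\textbf{Interior estimate.} We apply Lemma~\ref{lem:traforule} with some $q>0$ and need a lower bound on $-\mathrm{Ric}_- + \Delta_g h - c(n,q)|dh|_g^2$. Here $\Delta_g$ is the nonnegative Laplacian, so the sign bookkeeping must be done carefully; with the sign that makes the boundaryless argument work, we compute for $h=\delta^{-1}\log\varphi$:
\[
\Delta_g h = \frac{\Delta_g\varphi}{\delta\varphi} + \delta|dh|_g^2,
\]
and the equation gives $\frac{\Delta_g\varphi}{\varphi} = w + 2\beta\big(\frac1\varphi - 1\big)$. This yields
\[
-\mathrm{Ric}_- + \Delta_g h - c|dh|^2 = (\delta - c(n,q))|dh|^2 + \frac{2\beta}{\delta}\Big(\frac{1}{\varphi}-1\Big) \ge (\delta - c(n,q))|dh|^2 - \frac{2\beta}{\delta}.
\]
We choose $q$ so that $c(n,q) = (n-2)(n+q-2)/q \le \delta$. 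This requires $\delta > n-2$, and then $q = (n-2)^2/(\delta-(n-2))$ works. We also need $\delta\gamma<1$. Both requirements can be met with $\delta \in (n-2, 1/\gamma)$, which is exactly where the hypothesis $\gamma<1/(n-2)$ enters (for $n=2$ any $\delta$ works, since $c=0$). With these choices the coefficient is at least $-2\beta/\delta$. Multiplying by $e^{-2h}\le 1$ gives the curvature term $-K/T$ with $K = 2\beta T/\delta$, and the inequality in Lemma~\ref{lem:traforule} becomes \eqref{eq:BE} with $N = n+q$.

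\textbf{Explicit constants under (D').} Set $k=k_T(\meas)$ and choose $\delta$ with $\delta k$ of order $1/4$. Using $-\log(1-x)\le 2x$ for $x\le 1/2$ gives $K = 2\beta T/\delta \le 4k$. The constraint $k<1/(3(n-2))$ makes $\delta>n-2$ compatible, with $q\le 4(n-2)^2k$, and the same bookkeeping on $\log 2 + \beta T$ controls $C$.

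\textbf{Main obstacle.} The step I expect to be hardest is reconciling the sign convention of $\Delta_g$ with Lemma~\ref{lem:traforule} and the Robin problem, so that $\varphi$ behaves like a supersolution and $\mathrm{Ric}_-$ cancels with the correct sign. The next most delicate point is tracking the constants to obtain the precise values $C=4k$ and $N=n+4(n-2)^2k$. One may need to replace the term $2\beta(1/\varphi-1)$ by a sharper estimate, or pick $\delta$ differently, to make these exact.
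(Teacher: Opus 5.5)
Your proposal is correct and is essentially the paper's proof: $h=\delta^{-1}\log\varphi$ with $w=\delta\,\mathrm{Ric}_-$ and $v=\delta\,\mathrm{I\!I}_-$, Lemma~\ref{lem:2ndff} for convexity, and Lemma~\ref{lem:traforule} with $c(n,q)=\delta$, with your sign bookkeeping exactly right for the non-negative Laplacian. The paper takes the midpoint $\delta=\frac12(n-2+1/\gamma)$, which under your rule $1-e^{-\beta T}=\delta\gamma$ reproduces its $\beta$; it also sets $h\equiv 0$ when $\gamma=0$, a case where your $\beta$ would vanish.

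For the explicit constants, $\delta k_T(\meas)=1/4$ is too small, since (D') does not force $1/(4k_T(\meas))>n-2$. You need $\delta k_T(\meas)\ge\frac14+(n-2)k_T(\meas)$, and the paper's choice $\delta=(1-e^{-1})/k_T(\meas)$, $\beta=1/T$ works: it gives $K=2k_T(\meas)/(1-e^{-1})\le 4k_T(\meas)$, then $q\le 4(n-2)^2k_T(\meas)$ because $1-e^{-1}-\frac13>\frac14$, and finally $h\le\ln(2e)/\delta\le 4k_T(\meas)$.
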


\begin{remark}
In dimension $n=2$ we do not need any restriction on $\gamma$ apart from finiteness. In fact, in this case, the time change becomes a conformal change and we obtain a metric conformal and bi-Lipschitz to $g$ with respect to which the boundary is convex and the Gauss curvature is bounded from below in the interior of the manifold. Indeed, assume there is $T>0$ such that $k_T(\meas)$ is finite. Let $K_g$ be the Gauss curvature of $g$, and $(K_g)_-$ its negative part. Set
\begin{equation*}
    w:= \frac{(K_g)_-}{2k_T(\meas)}, \quad v:=\frac{\mathrm{I\!I}_-}{2k_T(\meas)}
\end{equation*}
so that
\[
k_T(w \mu + v\sigma) = k_T\left( \frac{\meas}{2k_T(\meas)}\right)=\frac{1}{2} \, \cdot 
\]
We are then in a position to apply Proposition~\ref{prop:varphi} with $\beta = -\ln(1/2)/T$. Set
\[
h:=2k_T(\meas) \ln(\varphi),
\]
where $\varphi \in \mathcal{C}^1_{\text{loc}}(\overline{M}) \cap \mathcal{C}^2_{\text{loc}}(M^\circ)$ is given by this proposition. Then 
\begin{equation*}
    0 \le h \le 2k_T(\meas)\ln(4), \qquad \partial_\nu^g h =  \mathrm{I\!I}_-, \qquad \Delta_g h \ge (K_g)_- -2k_T(\meas)\ln(4)/T. 
    \end{equation*}
Endowed with the conformal metric $g_h:=e^{2h}g$, the manifold $M$ has a convex boundary, since Lemma \ref{lem:2ndff} implies that for $X\in T\partial M$,
\begin{align*}
\mathrm{I\!I}^{\overline{g}}(X,X) =e^h\lp \mathrm{I\!I}^g(X,X) +  |X|^2_g \, \partial_{\nu}^g h\rp \ge e^h|X|^2_g \lp  - \mathrm{I\!I}_- +   \, \partial_{\nu}^g h\rp = 0.
\end{align*}
Moreover, the transformation rule for the Gauss curvature
\begin{equation*}
    K_{g_h} = e^{-2h} (\Delta_g h + K_g) \ge e^{-2h} (\Delta_g h - (K_g)_-)
\end{equation*}
implies that the
Gauss curvature $K_{g_h}$ is bounded from below by $-2k_T(\meas)\ln(4)/T$. Thus in this case no restriction on $k_T(\meas)$ is needed besides finiteness, and the Bakry-Émery condition is fulfilled with $N=2$.
\end{remark}

Let us prove Theorem \ref{thm:timechange}.

\begin{proof}
We first assume that assumption~\eqref{eq:NeumannDynkin} is fulfilled. If $\gamma=0$ then no time change is needed. Else set $\lambda:=\frac{1}{2}(n-2+1/\gamma)>n-2>0$, and $w=\la \mathrm{Ric}_-$ and $v=\la \mathrm{I\!I}_-$.
For
\begin{equation*}
\be := -\frac{1}{T}\, \ln\lp\frac{1}{2}(1-(n-2)\gamma)\rp>0
\end{equation*}
we have $k_T(w\mu+v\si) \le 1-e^{-\be T}$. Then the function $\varphi \in \mathcal{C}^2_{\text{loc}}(M^\circ)\cap \mathcal{C}^1_{\text{loc}}(\overline{M})$ constructed in Proposition~\ref{prop:varphi} fulfils 
\[1\le\varphi \le 2e^{\be T}, \qquad \partial_\nu^g \varphi  = \lambda \mathrm{I\!I}_-\varphi, \qquad \Delta_g \varphi - \lambda \mathrm{Ric}_-\varphi = -2\be \varphi +2\be \ge -2\be \varphi.\]
We define
\begin{equation*}
\overline{g} := e^{2h}g, \quad \overline{\mu}:=e^{2h}\mu_g, \quad  L:=e^{-2h}\Delta_g\quad \text{where} \quad h:=\frac{1}{\lambda}\log(\varphi).
\end{equation*}
Then $(M,\overline{g})$ has a convex boundary, since Lemma~\ref{lem:2ndff} implies that for any $X \in T\partial M$,
\begin{align*}
\mathrm{I\!I}^{\overline{g}}(X,X) &= e^{h} \lp \mathrm{I\!I}^{g}(X,X) + |X|^2_g \, \partial_{\nu}^g h \rp\ge e^h|X|_g^2(-\mathrm{I\!I}_{-} + \mathrm{I\!I}_{-}) \ge 0.
\end{align*}
We consider $q\in(0,+\infty]$ such that $\la=c(n,q)$, i.e.\ $q=\frac{(n-2)^2}{(\la -(n-2))}$. Then due to Lemma~\ref{lem:traforule}, for any $u \in \mathcal{C}_c^\infty(M^\circ)$ we obtain that the following inequality holds pointwise in $M^\circ$:
\begin{equation*}
\langle d Lu, du\rangle_{\overline{g}} - \frac{1}{2}L|du|_{\overline{g}}^2 \ge \frac{(Lu)^2}{n+q} + (-\mathrm{Ric}_- + \Delta_g h - \lambda |dh|_g^2) e^{-2h}|du|_{\overline{g}}^2.
\end{equation*}
Since
\begin{equation*}
\Delta_g h = \frac{\Delta_g \varphi}{\lambda \varphi} + \la |dh|_g^2
\end{equation*}
we get
\begin{align*}
 (-\mathrm{Ric}_- + \Delta_g h - \lambda|dh|_g^2) e^{-2h}|du|_{\overline{g}}^2 & = \lp \frac{\Delta_g \varphi}{\la \varphi} - \mathrm{Ric}_- \rp e^{-2h} |du|_{\overline{g}}^2\\
 & \ge -\frac{2\be}{\la} e^{-2h} |du|_{\overline{g}}^2 \ge - \frac{2\be}{\la} |du|_{\overline{g}}^2
\end{align*}
where we have used that $\varphi \ge 1$ implies $e^{-2h}\le 1$ to get the last inequality. Overall,
\begin{align*}
\langle d Lu, du\rangle_{\overline{g}} - \frac{1}{2}L|du|_{\overline{g}}^2 \ge \frac{(Lu)^2}{n+q} -\frac{2\be}{\la} |du|_{\overline{g}}^2.
\end{align*}
Therefore, BE$(-K/T,N)$ is fulfilled on the weighted Riemannian manifold with boundary $(M,\overline{g},\overline{\mu})$, with 
$K=-\frac{4\ln(\frac{1}{2}(1-(n-2)\gamma))}{(n-2+1/\gamma)}$ and $N=n+q$. Furthermore, 
\begin{equation*}
    0\le h \le \frac{1}{n-2} \ln\lp\frac{4}{1-(n-2)\gamma}\rp.
\end{equation*}
\vspace{0.2cm}\\

We now assume that condition~\eqref{eq:conditionD'} holds.
The proof goes as above, but we choose different values for the parameters $\be$ and $\la$, namely
\begin{equation*}
\la:=\frac{1-e^{-1}}{k_T(\meas)} \text { and } \be:=\frac{1}{T} \cdot 
\end{equation*}
We again set $h:=\ln(\varphi)/\la$.
Choosing $q>0$ such that $\la=c(n,q)$ results in
\begin{equation*}
    q= \frac{(n-2)^2k_T(\meas)}{1-e^{-1}-(n-2)k_T(\meas)} \, \cdot 
\end{equation*}
We obtain that BE$(-K/T,N)$ is fulfilled with
\begin{align*}
    K&=\frac{2\be T}{\la} = \frac{2k_T(\meas)}{1-e^{-1}} \le 4k_T(\meas) \qquad \text{and} \qquad 
    N = n+q \le n+ 4(n-2)^2k_T(\meas).
\end{align*}
Moreover,
\begin{equation*}
0 \le h \le \frac{\ln(2e)}{\la} \le \frac{2}{\la} \le 4k_T(\meas).
\end{equation*}
\end{proof}

\begin{remark}
If $\mathrm{I\!I}_-\equiv0$, then the Robin boundary condition in~\eqref{eq:explicitgenerator} becomes a Neumann one, and if $\mathrm{Ric}_-\equiv0$, then the operator $A$ becomes the Laplacian without perturbation. If $\mathrm{I\!I}_-\equiv \mathrm{Ric}_- \equiv 0$, then $I\equiv1$ in the proof of Proposition~\ref{prop:semigroup} and $\varphi\equiv1$ in Proposition~\ref{prop:varphi}, so that $e^{2h}\equiv 1$ and we do not make any time change. However, if either  $\mathrm{I\!I}_-\neq0$ or $\mathrm{Ric}_-\neq0$, then we do make a nontrivial time change. In particular, in a region of $M^\circ$ where $\mathrm{Ric}_-$ is large the factor $e^{2h}$ will be large, and in a region of $\partial M$ where $\mathrm{I\!I}_-$ is large the factor $e^{2h}$ will be large near that region. Coming back to the interpretation of time changes via Brownian motion, we see that, in regions where $\mathrm{Ric}_-$ is large and reflected Brownian motion tends to spread out fast, it is slowed down by the time change we provide.
\end{remark}

\section{Doubling, Poincaré and limit spaces}

In this section, we prove that the Neumann--Dynkin condition \eqref{eq:NeumannDynkin} implies a local doubling property and a local $L^1$ Poincaré inequality with parameters depending on $T$ and $\gamma$ only. To this aim, we begin with a preliminary result. For a weighted Riemannian manifold $(M^n,g,e^{-\psi}\mu)$ with
potential $\psi\in \mathcal{C}^1(\overline{M})\cap\mathcal{C}^2(M^\circ)$, let us set
\[
V_{g,\psi}(x,r) = \int_{B_r(x)} e^{-\psi} d\mu
\]
for any $x \in M$ and $r >0$. Then the following holds.

\begin{proposition}\label{prop:BEdoubling}
    Let $(M^n,g,e^{-\psi}\mu)$ be a weighted Riemannian manifold with convex boundary satisfying the Bakry-\'{E}mery condition $\mathrm{BE}(-K,N)$ for some $K \ge 0$ and $N\in[2,+\infty)$. Then the following holds. 
    \begin{enumerate}
        \item The space $(M^n,g,e^{-\psi}\mu)$ satisfies a local doubling property: for any $R>0$ there exists $C_D=C_D(K,N,R)>0$ such that for any $x \in M$ and $r \in (0,R)$,
        \[
        V_{g,\psi}(x,r) \le C_D \,  V_{g,\psi}(x,r/2).
        \]
        \item The space $(M^n,g,e^{-\psi}\mu)$ satisfies a local $L^1$ Poincaré inequality: for any $R>0$ there exists $C_P=C_P(K,N,R)>0$ such that for any $x \in M$ and $r \in (0,R)$, for any $f \in \mathcal{C}^1(M)$,
        \[
        \fint_{B_r(x)} \left| f - \fint_{B_r(x)} f e^{-\psi} d \mu\right| e^{-\psi} d \mu \le C_P \,  r \fint_{B_r(x)} |df|_g e^{-\psi} d\mu.
        \]
    \end{enumerate}
\end{proposition}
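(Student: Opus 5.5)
The plan is to reduce to the synthetic theory of curvature-dimension bounds on metric measure spaces. Then the doubling and Poincaré properties follow from known results for $\mathrm{CD}(-K,N)$ / $\mathrm{RCD}(-K,N)$ spaces.

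\textbf{Step 1: identify the metric measure space.} Consider $X:=(M,\mathsf{d}_g,e^{-\psi}\mu)$, where $\mathsf{d}_g$ is the length distance of $g$. By metric completeness, $X$ is a complete geodesic (length) space, and it is proper because $M$ is locally compact. Its Cheeger energy is the weighted Neumann Dirichlet form $\int_M |df|_g^2 e^{-\psi}d\mu$ with domain $H^1(M,g,e^{-\psi}\mu)$, which is quadratic, so $X$ is infinitesimally Hilbertian. The generator of this form is the weighted Neumann Laplacian $H_\psi$ from the definition above.

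\textbf{Step 2: upgrade to $\mathrm{RCD}(-K,N)$.} Definition~\ref{def:weightedbakryemery} is tested only on $\mathcal{C}_c^\infty(M^\circ)$. One first has to pass to the full Bakry--\'Emery inequality for the Neumann Laplacian, that is, $\Gamma_2\ge K|\cdot|^2+(\cdot)^2/N$ in the weak sense for the self-adjoint Neumann semigroup. This is the main obstacle. The natural route is the computation of Han \cite{HanUnpub}. Take test functions $u$ with $\partial_\nu u=0$. Integrating $\Gamma_2$ against nonnegative test functions produces a boundary term $\int_{\partial M}\partial_\nu(|du|^2)\,\phi\, e^{-\psi}d\sigma$. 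For Neumann $u$ this term equals $-2\,\mathrm{I\!I}(\nabla u,\nabla u)$ up to sign conventions, so it has the favourable sign exactly when $\partial M$ is convex. The interior pointwise inequality, together with convexity, then gives the weak $\mathrm{BE}(-K,N)$ condition for $H_\psi$. Because $\psi\in\mathcal{C}^1(\overline M)$ only, I would approximate $\psi$ or test against densities in the class of Neumann functions and pass to limits.

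Once the weak Bakry--\'Emery condition holds and the Sobolev-to-Lipschitz property is verified, the equivalence theorem of Ambrosio--Gigli--Savaré and Erbar--Kuwada--Sturm yields $\mathrm{RCD}(-K,N)$. Sobolev-to-Lipschitz is clear here: a function with $|df|_g\le1$ is $1$-Lipschitz for the length distance $\mathsf{d}_g$.

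\textbf{Step 3: consequences.} An $\mathrm{RCD}(-K,N)$ space satisfies the Bishop--Gromov inequality (Sturm). This gives
\[
\frac{V_{g,\psi}(x,r)}{V_{g,\psi}(x,r/2)}\le \frac{\int_0^{r}\sinh^{N-1}(t\sqrt{K/(N-1)})\,dt}{\int_0^{r/2}\sinh^{N-1}(t\sqrt{K/(N-1)})\,dt},
\]
which is bounded by some $C_D(K,N,R)$ for $r<R$, proving (1). A $\mathrm{CD}(-K,N)$ space also satisfies a local $(1,1)$-Poincaré inequality with constant depending on $K,N,R$ (Rajala). Applied to $f\in\mathcal{C}^1(M)$, whose minimal weak upper gradient is $|df|_g$, this gives (2). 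The requirement $N\ge2$ is harmless, since $N\ge1$ already suffices for these theorems.

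Alternatively, if only the curvature-dimension condition is needed rather than RCD, one can use the direct result that a weighted manifold with convex boundary and interior $\mathrm{Ric}_{\psi,N}\ge -K$ satisfies $\mathrm{CD}(-K,N)$. This holds because the geodesic convexity of $M^\circ$-closure lets optimal transport avoid the boundary. Either route rests on the boundary-convexity step identified above as the key difficulty.
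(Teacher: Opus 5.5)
Your proposal is correct and follows the paper's route. The paper also identifies the weighted manifold with convex boundary as an $\mathrm{RCD}^*(-K,N)$ space, which it simply cites from Ambrosio--Mondino--Savar\'e, whereas you sketch the boundary-term computation $\partial_\nu |du|_g^2=-2\,\mathrm{I\!I}^g(\nabla u,\nabla u)$ behind it; it then invokes Bishop--Gromov for the doubling property and Rajala's theorem for the local $L^1$ Poincar\'e inequality, exactly as you do.
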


\begin{proof}
    The Bakry-\'{E}mery condition $\mathrm{BE}(-K,N)$ implies that the weighted Riemannian manifold with convex boundary $(M^n,g,e^{-\psi}\mu)$ is an $\mathrm{RCD}^*(-K,N)$ space, as can be seen from \cite[Chapter 12]{AMS}, for instance. An $\mathrm{RCD}^*(-K,N)$ space satisfies a local doubling property and a local $L^1$ Poincaré inequality as above. Indeed, the local doubling property is a direct consequence of the Bishop--Gromov inequality for the larger class of $\mathrm{CD}(-K,N)$, see e.g.~\cite[Theorem 30.11]{Villani}, and the local $L^1$ Poincaré inequality on $\mathrm{CD}(-K,N)$ was proved by Rajala in \cite{Rajala}.
\end{proof}

\begin{remark}
The local doubling property may also be established by a suitable adaptation of the proofs of \cite{Qian,Lott,BakryQian,WeiWylie} in the presence of a convex boundary. These are based on mean comparison arguments that go through under convexity of the boundary. 
\end{remark}

We now couple Theorem \ref{thm:timechange} and Proposition \ref{prop:BEdoubling} to establish Theorem \ref{th:doubling&Poincaré}.

\begin{proof}
    By Theorem \ref{thm:timechange}, there exist $h\in \mathcal{C}^1(\overline{M})\cap \mathcal{C}^2(M^\circ)$ and $K\ge0$, $N \ge n$, $C>0$ depending on $n$ and $\gamma$ only, such that $0\le h \le C$ and $(M,g',\overline{\mu})$  satisfies the BE$(-K/T,N)$ condition, where $g' = e^{2h} g$ and $\overline{\mu}=e^{2h}\mu$. Let $\mu'=e^{nh}\mu$ be the volume measure associated with $g'$. Then $\overline{\mu}=e^{-\psi} \mu'$ with $\psi=(n-2)h$, so that $(M,g',\overline{\mu})=(M,g',e^{-\psi}\mu')$. By Proposition \ref{prop:BEdoubling}, there exists $C_D>0$ depending on $-K/T$ and $N$ only, i.e.~on $n,\gamma,T$ only, such that for any $x\in M$ and $r \in (0,e^C\sqrt{T})$,
    \[
        V_{g',\psi}(x,r) \le C_D \,  V_{g',\psi}(x,r/2).
        \]
    From $0\le h \le C$ we obtain that
    \[
    g \le g' \le e^{2C} g \qquad \text{and} \qquad \mu \le \overline{\mu} \le e^{2C} \mu.
    \]
    Use $B'$ to denote balls with respect to $g'$. Then for any $x \in M$ and $\tau>0$,
    \[
    B_\tau'(x) \subset B_\tau(x) \subset  B_{e^C\tau}'(x),
    \]
     and then
    \[
    V_g(x,\tau) \le V_{g',\psi}(x,e^C \tau) \qquad \text{and} \qquad V_{g',\psi}(x,\tau) \le e^{2C}V_g(x,\tau).
    \]
    Therefore, for any $r \in (0,\sqrt{T})$,
    \begin{align*}
        V_g(x,r) \le   V_{g',\psi}(x,e^Cr) & \le C_D \left(\frac{e^C r}{r/2}\right)^{\log_2 C_D} V_{g',\psi}(x,r/2) \le e^{(2+\log_2 C_D)C} C_D^2 \,V_g(x,r/2),
    \end{align*}
    where we use a classical consequence of the doubling property to get the second inequality. Along similar lines, one can show that a weak local $L^1$ Poincaré inequality holds: 
   there exists $\overline{C}_P=\overline{C}_P(n,T,\gamma)>0$ and $\Lambda = \Lambda(n,T,\gamma)\ge 1$ such that for any $x \in M$ and $r \in (0,\sqrt{T})$, for any $f \in \mathcal{C}^1(M)$,
        \begin{equation*}\label{eq:PI2}
        \fint_{B_r(x)} \left| f - \fint_{B_r(x)} f d \mu\right| d \mu \le \overline{C}_P \,  r \fint_{B_{\Lambda r}(x)} |df| d\mu.
        \end{equation*} 
    This can be turned into the desired local $L^1$ Poincaré inequality by applying techniques from the works of Jerison \cite{Jerison} and Maheux and Saloff-Coste \cite{MSC}.
\end{proof}

In view of Theorem \ref{th:precompactness}, we propose the following definition. Recall that  $\mathcal{M}(n,T,\gamma)$ is the set of (isometry classes of) pointed smooth metrically complete Riemannian manifolds $(M^n,g,o)$ with non-empty boundary satisfying \eqref{eq:NeumannDynkin}.

\begin{definition}
    We say that a pointed metric space $(X,d,o)$ is a Neumann--Dynkin limit if it arises as the pointed Gromov--Hausdorff limit of a sequence of elements in $\mathcal{M}(n,T,\gamma)$ for some $T>0$ and $\gamma \in [0,1/(n-2))$.
\end{definition}

Let us give a few relevant examples of Neumann--Dynkin limits.

\begin{example}
    Any bounded closed convex set $\mathcal{K} \subset \mathbb{R}^n$ with a non-empty interior can be approximated in the Hausdorff distance by closed convex sets with smooth boundary, see e.g.~\cite{Gruber}. If we endow these approximating sets with the restriction of the ambient Euclidean metric, then we obtain that they are flat Riemannian manifolds with convex boundary. Hence  $\mathcal{K}$ belongs to the Gromov--Hausdorff closure of $\mathcal{M}(n,T,0)$ for any $T>0$. Note that  $\mathcal{K} \notin \mathcal{M}(n,T,0)$ unless its boundary is smooth.
\end{example}

\begin{example}
    This example illustrates that a sequence satisfying a uniform Neumann--Dynkin condition may converge to a boundaryless manifold. Consider the cylinder $\mathcal{C} = [-\pi/2,\pi/2] \times \mathbb{S}^1$ endowed with the family of smooth metrically complete Riemannian metrics
    \[
    g_\beta  = dt^2 + \left( \frac{\cos(\beta t)}{\beta} \right)^2d \theta^2 \qquad \beta \in (0,1)
    \]
    where we use Fermi coordinates $(t,\theta)$ along the middle circle $\mathcal{E}=\{0\} \times \mathbb{S}^1$. Then
    \[
    (\mathcal{C},g_\beta) \to (\mathbb{S}^2,g_1) \qquad \text{as $\beta \to 1$}
    \]
    in the Gromov--Hausdorff sense. Here $g_1$ is the usual round metric. Each surface $(\mathcal{C},g_\beta)$ has a convex boundary and a Gaussian curvature constantly equal to $1$ in the interior, thus $(\mathcal{C},g_\beta) \in \mathcal{M}(2,T,T)$ for any $T>0$.
\end{example}

\begin{example}
This example is a variant of the boundaryless version given in \cite[Section 4, case $\alpha = \pi/3$]{CMT5}. Consider the Riemannian cylinder $(\mathcal{C},g_{1/2})$ from the previous example. Define the metrically complete Lipschitz Riemannian metric
    \[
    \overline{g}_{1/2}  =  e^{2\overline{u}} g_{1/2}\qquad \text{with} \quad \overline{u}(t,\theta) = \ln \left( \frac{1}{2-\sqrt{3}\sin(|t|)}\right)
    \]
    where we still use Fermi coordinates $(t,\theta)$ along $\mathcal{E}$. Due to the $|t|$ factor in the definition of $\overline{u}$, these metrics are smooth outside $\mathcal{E}$ but not differentiable in $\mathcal{E}$. It follows from \cite[(22)]{CMT5} that $\overline{g}_{1/2}$ admits a Gaussian curvature measure given by
    \[
    \overline{\omega}_{1/2} = - \sqrt{3} \mathcal{H}^1 \measrestr \mathcal{E} + \overline{\mu}_{1/2}
    \]
    where $\overline{\mu}_{1/2}$ is the Riemannian volume measure of $\overline{g}_{1/2}$ and $\mathcal{H}^1 \measrestr \mathcal{E}$ is the associated one-dimensional Hausdorff measure restricted to the equator $\mathcal{E}$. Approximating $\overline{u}$ by smooth heat kernel regularizations, 
    we obtain a sequence $\{h_\ell\}$ on $\mathcal{C}$ such that 
    \[
    (\mathcal{C}, h_{\ell}) \to (\mathcal{C},g_{1/2}) \qquad \text{as $\ell \to \infty$}
    \]
    in the Gromov--Hausdorff sense, and $(\mathcal{C},h_{_\ell}) \in \mathcal{M}(2,T,\gamma)$ for uniform $T$ and $\gamma$. 
\end{example}

\begin{example} This example is a collapsed one, that is to say, the dimension of the limit is strictly less than the one of the approximating manifolds. For any positive integer $k$, consider the epigraph
    \[
    M_k := \{(x,y,z) \in \R^3 : z \ge f_k(x,y)\}
\qquad \text{where} \qquad 
    f_k(x,y) = -\frac{x^2}{2} + 2^{k-1} y^2 \, \cdot
    \]
    This is a smooth three-dimensional manifold with boundary $$\partial M_k =  \{(x,y,z) \in \R^3 : z = f_k(x,y)\}.$$ Endow $M_k$ with the flat Euclidean metric $g_e$. The second fundamental form of $\partial M_k$ may be described via the Hessian matrix
    \[
    \nabla^2f_k(x,y) = \begin{pmatrix}-1 & 0 \\ 0 & 2^k \end{pmatrix}.
    \]
    Then $\mathrm{I\!I}_-$ is constantly equal to $1$ along the sequence $\{(M_k,g_e)\}_k$, so the sequence belongs to $\mathcal{M}(3,T,\gamma)$ for some $T>0$ and $\gamma \in (0,1)$. One can easily show that
    \[
    (M_k,g_e,0_3) \to (M_\infty,g_e,0_2)
    \]
    in the pointed Gromov--Hausdorff sense,
    where $0_2$ and $0_3$ are the origins of $\R^2$ and $\R^3$, respectively, and
    \[
    M_\infty := \{(x,y) \in \R^2 : y \ge f_\infty(x)\} \qquad \text{where} \qquad 
    f_\infty(x) = -\frac{x^2}{2} \, \cdot 
    \]
\end{example}

Let us conclude this section with another precompactness result. The local $L^1$ Poincaré inequality implies the $L^2$ one: this is explained in the book of Björn and Björn \cite[Theorem 4.21]{BB}, for instance. As a consequence, we can apply \cite[Theorem 1.17]{CMT} to get the following result.

\begin{theorem}[Mosco--Gromov--Hausdorff precompactness]\label{th:precompactness2}
    For $T>0$, $\gamma \in [0,1/(n-2))$ and $\eta >0$, consider the class $\mathcal{M}(n,T,\gamma,\eta)$ of isometry classes of pointed smooth metrically complete Riemannian manifolds $(M^n,g,o)$ satisfying \eqref{eq:NeumannDynkin} and
\[
\eta^{-1} \le V_g(o,\sqrt{T}) \le \eta.
\]
Then $\mathcal{M}(n,T,\gamma,\eta)$ is precompact in the pointed Mosco--Gromov--Hausdorff topology. Moreover, if a sequence $\{(M^n_i,g_i,o_i)\} \subset \mathcal{M}(n,T,\gamma,\eta)$ converges in the pointed Mosco--Gromov--Hausdorff tomology to a space $(X,d,\mu,o,\mathcal{E})$, then the following holds.
\begin{enumerate}
    \item The Dirichlet form $\mathcal{E}$ is strongly local, regular, and admits a heat kernel $p$.
    \item The pseudo-distance $d_{\mathcal{E}}$ associated with $\mathcal{E}$ is a distance that satisfies $c d_{\mathcal{E}} \le d\le d_{\mathcal{E}}$ for some $c \in (0,1]$ depending only on $n,T,\gamma$.
    \item Let $p_i$ be the heat kernel of $(M^n_i,g_i)$ for any $i$. Then for any $t>0$, the functions $p_i(t,\cdot,\cdot)$ converge to $p(t,\cdot, \cdot)$ in the uniform Gromov--Hausdorff topology over compact sets.
\end{enumerate}

\end{theorem}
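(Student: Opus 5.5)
The plan is to verify the hypotheses of \cite[Theorem 1.17]{CMT} uniformly on $\mathcal{M}(n,T,\gamma,\eta)$ and then read off the three conclusions from that theorem. As stated there, the hypotheses are: a uniform local doubling property on scales below $\sqrt{T}$, a uniform local $L^2$ Poincaré inequality on the same scales, and two-sided bounds on the volume of the unit-scale ball at the base point.

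Doubling and the $L^1$ Poincaré inequality come from Theorem \ref{th:doubling&Poincaré}, with constants $C_D,C_P$ depending only on $(n,T,\gamma)$. The $L^1$ inequality upgrades to an $L^2$ one, since a doubling space with an $L^1$ Poincaré inequality satisfies the $L^p$ version for $p\ge 1$ \cite[Theorem 4.21]{BB}. This may only give a weak form with a dilation factor on the right-hand side. Because our spaces are geodesic (length spaces), the same Jerison/Maheux--Saloff-Coste argument used at the end of the proof of Theorem \ref{th:doubling&Poincaré} removes that factor, at the cost of constants depending only on $(n,T,\gamma)$. The volume normalization $\eta^{-1}\le V_g(o,\sqrt{T})\le\eta$ is assumed directly. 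If \cite{CMT} normalizes at radius $1$ rather than $\sqrt{T}$, I will transfer the bound using local doubling iterated a bounded number of times (depending on $T$) along chains of balls, which is where the locality of the doubling constant matters.

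Given these inputs, \cite[Theorem 1.17]{CMT}, which builds on the work of Kasue and Kuwae--Shioya, yields precompactness in the pointed Mosco--Gromov--Hausdorff topology. It also gives that the limit form $\mathcal{E}$ is strongly local and regular with a heat kernel, and that the heat kernels converge uniformly on compact sets; these are items (1) and (3). For the heat kernel statement I should check that the objects approximated in \cite{CMT} are the Neumann heat kernels $p^N$ of $(M_i,g_i)$. This holds because the Dirichlet form $\int|df|^2d\mu$ on $H^1(M,g)$, with no boundary conditions imposed, generates exactly the Neumann Laplacian.

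Item (2), the comparison $c\,d_{\mathcal{E}}\le d\le d_{\mathcal{E}}$, is the part I expect to need the most care. On each smooth $M_i$ the intrinsic distance of the form coincides with the Riemannian distance, because $\partial M_i$ is a smooth hypersurface and Lipschitz functions with $|df|\le1$ are admissible. In the limit, the inequality $d\le d_{\mathcal{E}}$ follows from lower semicontinuity of energies under Mosco convergence applied to distance functions. The reverse bound with a constant $c$ depending only on the doubling and Poincaré constants is the standard consequence, via Cheeger/Koskela-type arguments, of a uniform doubling and Poincaré inequality passing to the limit. I will take this from \cite{CMT} and verify that the constant there depends only on $C_D,C_P$, hence on $(n,T,\gamma)$.
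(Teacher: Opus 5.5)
Your proposal is correct and follows the paper's own route. The paper feeds three inputs directly into \cite[Theorem 1.17]{CMT}, which then yields precompactness and items (1)--(3): the uniform local doubling property and local $L^1$ Poincaré inequality from the paper's first main theorem, the upgrade of the latter to an $L^2$ Poincaré inequality via \cite[Theorem 4.21]{BB}, and the assumed two-sided volume bound at scale $\sqrt{T}$. Your additional checks are consistent with this argument, which the paper leaves implicit in the citation: rescaling to the normalization radius, identifying the approximating heat kernels with the Neumann ones, and the dependence of $c$ on $n,T,\gamma$ only, through $C_D$ and $C_P$.
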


\section{Functional inequalities}

In this section, we additionally assume that $M$ is compact.

\subsection{Poincar\'{e} inequality}
We aim at bounding the Neumann spectral gap $\lambda_1=\lambda_1(M,g)$ from below in terms of the Neumann--Dynkin bound, diameter and dimension. This is equivalent to bounding the Poincar\'{e} constant from above. \\
The time change featuring a function $h$ constructed in Theorem~\ref{thm:timechange} allows to compare $\lambda_1$ with the spectral gap of the Neumann Laplacian on the time-changed manifold, which has convex boundary and fulfils a Bakry-\'{E}mery condition. Lower bounds on the spectral gap in this latter setting have been studied before by Wang~\cite{MR1262968}, Chen and Wang~\cite{MR1450586} and Bakry and Qian~\cite{MR1789850} via coupling methods resp.\ reduction to a one-dimensional Sturm-Liouville problem. In particular, we are only interested in the case $h\neq0$, else we already have non-negative Ricci curvature and convex boundary and our ansatz does not give any new results in that setting.\\
The case of a nonconvex boundary has also been investigated by Wang in~\cite{MR2158015,MR2344874} via gradient estimates for the Neumann heat semigroup resp.\ via a time change. Thus our ansatz is analogous to~\cite{MR2344874}, but two relevant distinctions are that firstly we use the contractive Dynkin bounds on Ricci curvature and second fundamental form instead of pointwise lower bounds and secondly our time change is different from the one used in~\cite{MR2344874} which is based on an explicit geometric construction using the distance-to-the-boundary function under further stronger geometric assumptions.

\begin{theorem}\label{thm:spectralgap}
Let $(M^n,g)$ be a compact smooth Riemannian manifold of diameter $D$ with a non-empty boundary. Assume there exists $T>0$ such that \eqref{eq:D} holds. Then 
\begin{equation*}
\lambda_1 \ge \begin{cases}
\frac{\pi^2 K}{8T(\exp(Ke^{4K}D^2/(8T))-1)} & \text{if $n>2$},\\
\frac{\pi^2}{e^{4K}D^2} \sqrt{\frac{1+2e^{4K}D^2 K}{\pi^4 T}} \cosh^{-1}\lp\frac{e^{4K}D}{2}\sqrt{\frac{K}{T}}\rp & \text{if $n=2$.}
\end{cases}
\end{equation*}
Here $K=4k_T(\meas)$ is the constant from Theorem~\ref{thm:timechange}.
\end{theorem}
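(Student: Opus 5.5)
The plan is to transfer a known spectral gap estimate from the time-changed space back to $(M,g)$. Under \eqref{eq:D}, which is \eqref{eq:conditionD'}, Theorem~\ref{thm:timechange} gives $h\in\mathcal{C}^1(\overline{M})\cap\mathcal{C}^2(M^\circ)$ with $0\le h\le C=4k_T(\meas)=K$. It also says that $(M,\overline{g},\overline{\mu})=(M,e^{2h}g,e^{2h}\mu)$ has convex boundary and satisfies $\mathrm{BE}(-K/T,N)$ with $N=n+4(n-2)^2k_T(\meas)$.

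\textbf{Step 1: comparison of Rayleigh quotients.} The Dirichlet energy is invariant under the time change: $\int|d f|^2_{\overline g}\,d\overline\mu=\int |df|_g^2\,d\mu$. Only the $L^2$ measure changes, and $\mu\le\overline\mu\le e^{2K}\mu$. For $f$ with $\int f\,d\mu=0$ we have $\int f^2 d\overline\mu\ge \inf_c\int (f-c)^2d\overline\mu$, and this infimum is at most $e^{2K}\int (f-c)^2 d\mu$ for the optimal $c$. The Rayleigh quotient characterization of the Neumann spectral gap then gives
\[
\lambda_1(M,g)\ \ge\ e^{-2K}\,\overline\lambda_1 ,
\]
where $\overline\lambda_1$ is the spectral gap of the weighted Neumann Laplacian $L=e^{-2h}\Delta_g$ on $(M,\overline g,\overline \mu)$. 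The precise power of $e^{K}$ is bookkept loosely here; it must be matched with the $e^{4K}$ in the statement, which also absorbs the diameter change below.

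\textbf{Step 2: diameter control.} Since $g\le\overline g\le e^{2K}g$, we get $\overline D:=\mathrm{diam}(M,\overline g)\le e^{K}D$. Because the boundary is convex for $\overline g$, every pair of points is joined by a minimizing curve, and the usual one-dimensional reduction applies.

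\textbf{Step 3: invoke the convex, curvature-dimension spectral gap estimate.} For compact weighted manifolds with convex boundary satisfying $\mathrm{BE}(-\kappa,N)$, Bakry--Qian~\cite{MR1789850} and Chen--Wang~\cite{MR1450586} reduce $\overline\lambda_1$ to the first eigenvalue of a one-dimensional Sturm--Liouville model on an interval of length $\overline D$. For $\kappa>0$ this yields explicit lower bounds. When $n>2$ I would use Chen--Wang's bound, of the form $\overline\lambda_1\ge \frac{\pi^2\kappa}{8(\exp(\kappa\overline D^2/8)-1)}$ for dimension-free $\mathrm{BE}(-\kappa,\infty)$, which also applies under $\mathrm{BE}(-\kappa,N)$. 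For $n=2$ the time change is conformal, $N=2$ and $\psi=0$, so I would use the sharper finite-dimensional Bakry--Qian model. That model produces the $\cosh^{-1}$ expression. Then plug in $\kappa=K/T$ and $\overline D\le e^{K}D$, and multiply by the factor from Step 1. The factors $e^{4K}$ in the final formula come from combining the measure distortion with $\overline D^2\le e^{2K}D^2$, possibly up to a lossy monotonicity argument.

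\textbf{Main obstacle.} The principal difficulty is Step 3. One must check that the cited coupling or Sturm--Liouville results apply to weighted manifolds with convex boundary and weight $e^{-\psi}$ with $\psi=(n-2)h$, which is only $\mathcal{C}^1$ up to the boundary. If needed, I would approximate $h$ by smooth functions or use the $\mathrm{RCD}^*(-K/T,N)$ structure from Proposition~\ref{prop:BEdoubling}, for which the same one-dimensional comparison is available. After that, it remains to track constants carefully so that the exponents $e^{4K}$ come out exactly as stated.
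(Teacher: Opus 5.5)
Your overall route is the paper's. You apply Theorem~\ref{thm:timechange} under \eqref{eq:D}, use invariance of the Dirichlet energy under the time change to compare Rayleigh quotients, control the diameter of $(M,e^{2h}g)$, and feed $\mathrm{BE}(-K/T,\infty)$ with convex boundary into Chen--Wang.

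However, Step~1 as written does not prove the inequality you state. From $\int_M|df|_g^2\,d\mu\ge\overline\lambda_1\inf_c\int_M(f-c)^2\,d\overline\mu$ you need a \emph{lower} bound on $\inf_c\int(f-c)^2\,d\overline\mu$ in terms of $\int f^2\,d\mu$. The upper bound ``at most $e^{2K}\int(f-c)^2\,d\mu$'' points the wrong way; used symmetrically it only gives $\lambda_1\le e^{2K}\overline\lambda_1$. The needed fact is $h\ge0$, i.e.\ $\overline\mu\ge\mu$, which you wrote down but did not use. For the $\mu$-mean-zero eigenfunction $f$,
\[
\inf_{c\in\R}\int_M (f-c)^2\,d\overline\mu\ \ge\ \inf_{c\in\R}\int_M (f-c)^2\,d\mu=\int_M f^2\,d\mu ,
\]
so $\lambda_1\ge\overline\lambda_1$ with no exponential loss at all. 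This is exactly the paper's first step.

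This also corrects your account of $e^{4K}$: the measure comparison costs nothing. In the paper, $e^{4K}$ comes only from the crude diameter estimate $D'\le e^{2K}D$. Your $\overline D\le e^{K}D$ is sharper. Since $x\mapsto \pi^2\kappa/(8(e^{\kappa x^2/8}-1))$ is decreasing, $\lambda_1\ge\overline\lambda_1$ immediately gives the $n>2$ bound with $e^{2K}$, hence the stated one. Even $\lambda_1\ge e^{-2K}\overline\lambda_1$, once correctly justified, would suffice for $n>2$, via $e^{as}-1\ge s(e^a-1)$ for $s\ge1$, but this is unnecessary.

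For $n=2$ the paper does not switch to Bakry--Qian. It stays with Chen--Wang's comparison operator $\tilde{\mathcal L}$ with $\tanh$ drift and takes the explicit estimate from their corollary. There, $\cosh^{-1}$ should be read as Chen--Wang's factor $\cosh^{1-n}$ at $n=2$, i.e.\ a reciprocal. That makes the bound decreasing in the diameter, so the upper bound on $D'$ may be substituted. Your claim that the finite-$N$ Bakry--Qian model ``produces'' that expression is unsupported as stated. You would need to cite that corollary or actually estimate the model eigenvalue, and check monotonicity in the diameter before substituting $\overline D\le e^KD$.

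Finally, you flag a regularity concern: the weight and metric are only $\mathcal{C}^1$ up to $\partial M$. The paper does not address this either; it applies Chen--Wang directly.
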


\begin{proof}
Let $\mu$ be the normalised volume measure on M. Let $f$ be an eigenfunction for $\lambda_1$ and assume without loss of generality that $\int_M f d\mu =0$ and $\int_M f^2 d\mu =1$. Moreover let $\overline{\lambda}_1$ be the first nontrivial eigenvalue of the weighted Neumann Laplacian on $(M, e^{2h}g,e^{2h}\mu)$ which we denote by $\overline{\Delta_g}$. For brevity in the following we mark by $\overline{\phantom{n}}$ all quantities that use the time changed metric and measure. Then
\begin{align*}
\lambda_1 &= \int_M |\nabla f|^2 d\mu = \int_M  \overline{\langle \overline{\nabla} f, \overline{\nabla} f \rangle} d\overline{\mu} \ge \overline{\lambda}_1 \lp \int_M f^2 d\overline{\mu} - \frac{\lp \int_M f d\overline{\mu}\rp^2}{\overline{\mu}(M)}\rp = \overline{\lambda}_1 \inf_{c\in \R} \int_M (f-c)^2 d\overline{\mu}\\
&\ge \overline{\lambda}_1 \inf_{x\in M} e^{2h(x)} \cdot  \inf_{c\in \R}\int_M (f-c)^2 d\mu = \overline{\lambda}_1 \inf_{x\in M} e^{2h(x)}.
\end{align*}
It now remains to bound both $e^{2h}$ and $\overline{\lambda}_1$ from below.
Without further assumptions on $\mathrm{Ric}_-$ and $\mathrm{I\!I}_-$ we know from the construction of $h$ in the proof of Theorem~\ref{thm:timechange} that $e^{2h}\ge1$.\\
Bounds for $\overline{\lambda}_1$ are provided e.g.\ in~\cite{MR1262968,MR1450586,MR1789850}.
In the following we will use the bound provided in~\cite[Theorem 1, iv)]{MR1450586}. For this purpose we write $\overline{\Delta_g}$ as a Laplacian with drift corresponding to weighting the measure by $e^{-(n-2)h}$ in the conformally changed (as opposed to time changed!) manifold $(M,g',\mu'):=(M,e^{2h}g,e^{nh}\mu)$, all quantities that use the conformally changed metric and measure are marked by '.
\begin{equation*}
\overline{\Delta_g} = e^{-2h}\Delta_g= e^{-2h}\lp \Delta_g f + (n-2)\nabla h - (n-2)\nabla h \rp= \Delta'_g - (n-2)\nabla'h .
\end{equation*}
$h$ has been constructed such that on the manifold $(M,e^{2h}g,e^{2h}\mu)$ or equivalently on the manifold $(M,g',e^{-(n-2)h}\mu')$ BE$(-K/T,N)$ is fulfilled with $K=4k_T(\meas)$. Hence we can apply~\cite{MR1450586}. Thus
\begin{equation*}
\overline{\lambda}_1 \ge \lambda(K/T),
\end{equation*}
where $\lambda(K/T)$ denotes the first mixed eigenvalue of the operator 
\begin{equation*}
\mathcal{L}:= 4 \frac{d^2}{dr^2} + J(r)\frac{d}{dr},\ J(r):=\frac{Kr}{T}
\end{equation*}
on $[0,D']$ with Neumann condition at $D'$ and Dirichlet condition at 0. Here $D'$ is the diameter of $(M,g',\mu')$.\\ 
It remains to bound $\lambda(K/T)$ below. From~\cite[Corollary 1]{MR1450586} we obtain
\begin{equation*}
\overline{\lambda}_1 \ge \frac{\pi^2 K}{8T(\exp(KD'^2/(8T))-1)}.
\end{equation*}
Further we have $h \le K$ and thus $D' \le e^{2K} D$ and
\begin{equation*}
\overline{\lambda}_1 \ge \frac{\pi^2 K}{8T(\exp(Ke^{4K}D^2/(8T))-1)}.
\end{equation*}
If $n=2$, then we only have a conformal change of metric. Then still following~\cite{MR1450586}
\begin{equation*}
\overline{\lambda}_1 \ge \tilde{\lambda}(K/T),
\end{equation*}
where $\tilde{\lambda}(K/T)$ denotes the first mixed eigenvalue of the operator 
\begin{equation*}
\tilde{\mathcal{L}}:= 4 \frac{d^2}{dr^2} + I(r)\frac{d}{dr},\ I(r):=2\sqrt{\frac{K}{T(n-1)}}\tanh\lp\frac{r}{2}\sqrt{\frac{K}{T(n-1)}}\rp
\end{equation*}
on $[0,D']$ with Neumann condition at $D'$ and Dirichlet condition at 0. And further
\begin{align*}
\tilde{\lambda}(K/T) &\ge \frac{\pi^2}{D'^2} \sqrt{\frac{1+2D'^2K}{\pi^4 T}} \cosh^{-1}\lp\frac{D'}{2}\sqrt{\frac{K}{T}}\rp \\
&\ge \frac{\pi^2}{e^{4K}D^2} \sqrt{\frac{1+2e^{4K}D^2 K}{\pi^4 T}} \cosh^{-1}\lp\frac{e^{4K}D}{2}\sqrt{\frac{K}{T}}\rp.
\end{align*}
\end{proof}

\begin{remark}
Note that in the previous proof we only use BE$(-K/T,\infty)$ as opposed to the stronger BE$(-K/T,N)$ when we bound $\overline{\lambda}_1$ from below by $\lambda(K/T)$. This is less precise but results in a simpler result. Alternatively, according to~\cite[Theorem 14]{MR1789850}, we could bound $\overline{\lambda}_1$ from below by the first nontrivial eigenvalue in the following one-dimensional eigenvalue problem
\begin{gather*}
v''(x) - S(x)v'(x) = - \lambda v(x) \text{ on } \lp-\frac{D'}{2},\frac{D'}{2}\rp \text{ with } v'\lp-\frac{D'}{2}\rp = v'\lp\frac{D'}{2}\rp =0\\
\text{and } S(x):= - \sqrt{(N-1)K/T} \tanh\lp \sqrt{\frac{K}{T(N-1)}} x\rp.
\end{gather*} 
\end{remark}

\begin{remark}
One could obtain similar bounds on the spectral gap of the Neumann $p-$Laplacian on $(M,g,\mu)$ by comparison with the spectral gap of the drift Neumann $p-$Laplacian on the time changed manifold and by using upper and lower bounds on $h$ and existing bounds for the spectral gap of the p-Laplacian on manifolds with convex boundary under the BE$(K,N)$ condition as in~\cite{MR4293945,MR4471716}.
\end{remark}

\subsection{Logarithmic Sobolev inequality}

In this section, we consider the logarithmic Sobolev inequality on $M$. Writing $\mu$ for the normalised volume measure on $M$, this is
\begin{equation*}
\mathrm{Ent}_\mu\lp f^2\rp:= \int_M f^2 \ln\lp f^2\rp d\mu - \int_M f^2 d\mu \ln\lp\int_M d^2 d\mu\rp \le L \cdot \int_M |\nabla f|^2 d\mu \ \forall f\in H^1(M,g).
\end{equation*}
We aim here at bounding the logarithmic Sobolev constant $L$ (i.e.\ the optimal constant fulfilling the above inequality) on $M$ from above in terms of the contractive Dynkin
bounds, diameter and dimension.

Again bounding the logarithmic Sobolev constant from above requires control over both the geometry in the interior and of the boundary in general.
Explicit upper bounds for the (drift) Neumann Laplacian on compact manifolds with convex boundary and uniform lower bounds on the Ricci curvature have been studied in~\cite{MR1426537,MR1452551,MR1698947} while the case of nonconvex boundary but second fundamental form uniformly bounded from below is treated in~\cite{MR2344874}. We are not aware of results on upper bounds for the logarithmic Sobolev constant in the case of $L^p$ or Dynkin bounds on the Ricci curvature or second fundamental form. 

As for the Poincar\'{e} constant in our setting the time change constructed in Theorem~\ref{thm:timechange} allows to compare $L$ with the logarithmic Sobolev constant on the time-changed manifold, thus reducing the discussion to a setting in which upper bounds on the logarithmic Sobolev constant have been studied before (see above). The ansatz is thus again analogous to~\cite{MR2344874}.

\begin{theorem}\label{thm:logSobolev}
Let $(M^n,g)$ be a compact smooth Riemannian manifold of diameter $D$ with a non-empty boundary. Assume there exists $T>0$ such that \eqref{eq:D} holds. Then for $n\ge2$ it holds
\begin{equation*}
 L \le \frac{4e^{4K}D^2}{\sqrt{(1+e^{4K}D^2 K/T)^2+4e^{4K}D^2 \overline{\lambda}_1} -1 -e^{4K}D^2K/T} \, \cdot 
\end{equation*}
This can be further bounded by replacing $\overline{\lambda}_1$ by $\lambda(K/T)$ or $\tilde{\lambda}(K/T)$ (for n=2).\\
Furthermore for $n=2$ it holds
\begin{equation*}
L\le 8T \frac{(\exp(6Ke^{4K}D^2/T)-1)\exp(1+2e^{4K}D^2K/T)}{K} \, \cdot
\end{equation*}
Here $K=4k_T(\meas)$ is the constant from Theorem~\ref{thm:timechange} and $\overline{\lambda}_1$ denotes the first nontrivial eigenvalue of the Neumann Laplacian on $(M,e^{2h}g,e^{2h}\mu_g)$.
\end{theorem}

\begin{proof}
Let $L(D,K,n)$ be the biggest logarithmic Sobolev constant reached on an $n$-dimensional compact Riemannian manifold with convex boundary, diameter $D$ and with respect to a measure $\frac{e^V d\mu}{\int_M e^V d\mu}$ such that BE$(-K,\infty)$ is fulfilled.\\ 
Now since $h$ was constructed such that $(M,e^{2h}g,e^{2h}\mu)$ (or equivalently $(M,g',e^{-(n-2)h}\mu')$) has convex boundary and fulfils BE$(-K/T,n)$ we have
\begin{equation*}
\mathrm{Ent}_{\overline{\mu}/\overline{Z}}(f^2) \le L(D',K/T,n) \cdot \int_M |\nabla' f|'^2 d\frac{\overline{\mu}}{\overline{Z}} = L(D',K/T,n) \cdot \int_M |\nabla f|^2 d\frac{\mu}{\overline{Z}}.
\end{equation*}
Furthermore recall that
\begin{equation*}\label{eq:entropy}
\mathrm{Ent}_\mu(f^2)= \inf_{a\in[0,\infty)} \int_M f^2\ln(f^2/a) - f^2 +a d\mu.
\end{equation*}
Therefore
\begin{equation*}
\mathrm{Ent}_{\overline{\mu}/\overline{Z}}(f^2) \ge \frac{1}{\overline{Z}} \inf_M e^{2h} \mathrm{Ent}_\mu(f^2).
\end{equation*}
Thus together
\begin{equation*}
    \mathrm{Ent}_\mu(f^2) \le \frac{L(D',K/T,n)}{\inf_M e^{2h}} \int_M |\nabla f|^2 d\mu.
\end{equation*}
Again without further assumptions on $\mathrm{Ric}_-$ and $\mathrm{I\!I}_-$ we know from the construction of $h$ in the proof of Theorem~\ref{thm:timechange} that $e^{2h}\ge1$. It remains to bound $L(D',K/T,n)$.
For $n\ge2$ by~\cite[Theorem 3.1]{MR1698947} we obtain
\begin{align*}
    L &\le \frac{4D'^2}{\sqrt{(1+D'^2 K/T)^2+4D'^2 \overline{\lambda}_1} -1 -D'^2K/T}\\
    &\le \frac{4e^{4K}D^2}{\sqrt{(1+e^{4K}D^2 K/T)^2+4e^{4K}D^2 \overline{\lambda}_1} -1 -e^{4K}D^2K/T}.
\end{align*}
Note that this bound for the logarithmic Sobolev constant from~\cite{MR1698947} features the spectral gap $\overline{\lambda}_1$ for the drift Laplacian on $(M,g',e^{-(n-2)h}\mu')$ or in other words the spectral gap for the Laplacian on $(M,e^{2h}g,e^{2h}\mu)$. As in the proof of Theorem~\ref{thm:spectralgap} $\overline{\lambda}_1$ can be further bounded in terms of $\lambda(K/T)$ or $\tilde{\lambda}(K/T)$ (for $n=2$).\\
For $n=2$ by~\cite[Theorem 3.3]{MR1452551} we obtain
\begin{align*}
    L&\le 8T \frac{(\exp(6KD'^2/T)-1)\exp(1+2D'^2K/T)}{K}\\
    &\le 8T \frac{(\exp(6Ke^{4K}D^2/T)-1)\exp(1+2e^{4K}D^2K/T)}{K}.
\end{align*}
\end{proof}

\bibliographystyle{abbrv}
\bibliography{bib}
\end{document}